\definecolor{mintgreen}{RGB}{152,255,152}
\definecolor{pinksalmon}{RGB}{255,102,102}
\definecolor{hueso}{RGB}{245,245,220}
\definecolor{marfil}{RGB}{255,253,208}
\definecolor{amarillo}{RGB}{255,255,0}
\numberwithin{equation}{section}
\newtheorem{theorem}{Theorem}[section]
\newtheorem{proposition}[theorem]{Proposition}
\newtheorem{corollary}[theorem]{Corollary}
\newtheorem{hypothesis}[theorem]{Hypothesis}
\def\moverlay{\mathpalette\mov@rlay}
\def\mov@rlay#1#2{\leavevmode\vtop{%
   \baselineskip\z@skip \lineskiplimit-\maxdimen
   \ialign{\hfil$\m@th#1##$\hfil\cr#2\crcr}}}
\newcommand{\charfusion}[3][\mathord]{
    #1{\ifx#1\mathop\vphantom{#2}\fi
        \mathpalette\mov@rlay{#2\cr#3}
      }
    \ifx#1\mathop\expandafter\displaylimits\fi}
\newcommand{\suchthat}{\;\ifnum\currentgrouptype=16 \middle\fi|\;}
\newcommand{\C}{\mathbb{C}}
\newcommand{\Q}{\mathbb{Q}}
\newcommand{\R}{\mathbb{R}}
\newcommand{\Gal}[1]{\operatorname{Gal}#1}
\newcommand{\op}[1]{\operatorname{#1}}
\newcommand{\N}{{\rm{N}}}
\newcommand{\calN}{\mathcal{N}}
\newcommand{\mfc}{\mathfrak{c}}
\newcommand{\beq}{\begin{equation}}
\newcommand{\eeq}{\end{equation}}
\newcommand\cm{\operatorname{cm}}
\newcommand\Weyl{\operatorname{Weyl}}
\newcommand\Res{\operatorname{Res}}
\newcommand\Cl{\operatorname{Cl}}
\renewcommand\b\bullet
\renewcommand\c\times
\theoremstyle{definition}
\newtheorem{remark}[theorem]{Remark}
\newtheorem{example}[theorem]{Example}
\def\C{{\mathbb C}}
\def\R{{\mathbb R}}
\def\Q{{\mathbb Q}}
\def\b{{\frak b}}
\def\O_K{{\Cal{O}_{K}}}
\def\O_F{{\Cal{O}_{F}}}
\def\N_F{{\Cal{N}_{F/\Q}}}
\begin{document}

\title{The distribution of $G$-Weyl CM fields and the Colmez conjecture}
\author{}

\author{Adrian Barquero-Sanchez, Riad Masri, and Frank Thorne}

\address{Escuela de Matem\'atica, Universidad de Costa Rica, San Jos\'e 11501, Costa Rica}

\email{adrian.barquero\_s@ucr.ac.cr}

\address{Department of Mathematics, Mailstop 3368, Texas A\&M University, College Station, TX 77843-3368 }

\email{masri@math.tamu.edu}

\address{University of South Carolina, Department of Mathematics, 317-O LeConte College, 1523 Greene Street, Columbia, SC 29201}

\email{thorne@math.sc.edu}

\begin{abstract} Let $G$ be a transitive subgroup of $S_d$ and $E$ be a CM field of degree $2d$ 
with a maximal totally real $G$-field. If the Galois group of the Galois closure of $E$ is isomorphic to the wreath product of 
$C_2$ and $G$, then we say that $E$ is a \textit{$G$-Weyl} CM field. 

Let $N_{2d}^{\textrm{Weyl}}(X,G)$ count the $G$-Weyl CM fields $E$ of degree $2d$ with discriminant $|d_E| \leq X$ and define 
\begin{align*}
N_{2d}^{\textrm{Weyl}}(X):=\sum_{G \leq S_d}N_{2d}^{\textrm{Weyl}}(X,G). 
\end{align*}
Further, let $N_{2d}^{\textrm{cm}}(X)$ count the CM fields $E$ of degree $2d$ with discriminant $|d_E| \leq X$. Assuming a weak form of the upper bound 
in Malle's conjecture which is known to be true in many cases, we build upon an approach of Kl\"uners to prove that 
\begin{align*}
\frac{N_{2d}^{\textrm{Weyl}}(X,G)}{N_{2d}^{\textrm{cm}}(X)} = C(d, G) + O(X^{-\alpha(d,G)}) 
\end{align*}
and 
\begin{align}\label{abstracteq}
\frac{N_{2d}^{\textrm{Weyl}}(X)}{N_{2d}^{\textrm{cm}}(X)} = 1 + O(X^{-\beta(d)}) 
\end{align}
for some explicit positive constants $C(d,G), \alpha(d,G)$, and $\beta(d)$. 

We then apply these distribution results to study the Colmez conjecture. Using 
the recently proved averaged Colmez conjecture, 
we deduce that the Colmez conjecture is true for $G$-Weyl CM fields. Combined with (\ref{abstracteq}), we conclude that the Colmez conjecture is 
true for an asymptotic density of 100\% of CM fields of degree $2d$; in other words, the Colmez conjecture is true for a random CM field. 

\end{abstract}

\maketitle

\section{Introduction and statement of results}

The distribution of number fields with prescribed Galois group has been studied extensively over the last two decades, spurred in part  
by very precise conjectures of Malle \cite{Mal02, Mal04} for the asymptotic growth of the corresponding counting functions. 
In this paper, we will study this distribution problem for CM fields. 

Recall that a CM field $E$ of degree $2d$ is a totally imaginary quadratic
extension of a totally real field $F$ of degree $d$ over $\Q$. Let $E^c$ be the Galois closure of $E$ and  
$S_d$ be the symmetric group. Then the Galois group $\Gal(E^c/\Q)$ embeds as a subgroup of the 
wreath product $C_2 \wr S_d$. In their study of special points on Shimura varieties, 
Chai and Oort  \cite{CO12} defined $E$ to be a \textit{Weyl} CM field if
$\Gal(E^c/\Q) \cong C_2 \wr S_d$. 
The Weyl CM fields are associated to special 
CM points on the moduli space  
of principally polarized abelian varieties of dimension $d$ called Weyl CM points. 

Now, let $G$ be the transitive subgroup of $S_d$ such that $\Gal(F^c/\Q) \cong G$. Then 
$\Gal(E^c/\Q)$ embeds as a subgroup of the wreath product $C_2 \wr G$ (see e.g. Proposition \ref{embed}). This is a refinement of the 
above mentioned embedding into $C_2 \wr S_d$. 
We define $E$ to be a \textit{$G$-Weyl} CM field if $\Gal(E^c / \Q) \cong C_2 \wr G$. In other words, $E^c$ has maximal possible Galois group, 
subject to the restriction that its maximal totally real subfield $F^c$ have Galois group $G$. 
With this terminology, a Weyl CM field in the sense of Chai and Oort is an $S_d$-Weyl CM field.

Assuming a weak form of the upper bound in Malle's conjecture which is known to be true in many cases 
(see Hypothesis \ref{WeakMalle} and Corollary \ref{Gpairs}), 
we will prove an asymptotic formula with a power-saving error term for the number of $G$-Weyl CM fields $E$ of degree $2d$ with discriminant $|d_E| \leq X$. 
This asymptotic formula implies that for any fixed choice of transitive subgroup $G \leq S_d$, 
a positive proportion of CM fields of degree $2d$ are $G$-Weyl, and moreover, that as $G$ ranges over the transitive subgroups of $S_d$, 
these fields collectively comprise an asymptotic density of 100\% of 
CM fields of degree $2d$ (see Theorem \ref{WeylDensity}).

\begin{remark}
In  \cite[p. 5]{Oor12}, Oort suggests that it is likely that ``most CM fields are [$S_d$-]Weyl CM fields''; such a hypothesis is in line
with a general heuristic that `Galois groups like to be as big as possible'. Perhaps surprisingly, our results show that this heuristic doesn't hold
here. For example, the $S_3$-Weyl CM fields comprise approximately $31\%$ of CM fields of degree $6$, while the 
$C_3$-Weyl CM fields comprise approximately $69\%$ of CM fields of degree $6$. Similarly, the $S_4$-Weyl 
CM fields comprise approximately $20\%$ of CM fields of degree $8$, while the $D_4$-Weyl CM 
fields comprise approximately $48\%$ of CM fields of degree $8$. See Table \ref{Table2} for these statistics.

\end{remark}

Our approach to counting $G$-Weyl CM fields is based on work of Kl\"uners \cite{Klu12}, which established asymptotics 
for the counting function of number fields with Galois group $C_2 \wr G$, without signature conditions.
We will adapt Kl\"uners' work to handle the signature conditions needed to count CM fields. We also give 
power-saving error terms which incorporate recent progress on
non-trivial bounds for 2-torsion in class groups of number fields \cite{BSTTTZ17} and subconvexity bounds for ray class $L$--functions 
of totally real fields \cite{ELMV11}, and determine the weakest form of the upper bound in Malle's conjecture needed for our results.

We next discuss the connection between the distribution of $G$-Weyl CM fields 
and the Colmez conjecture \cite{Col93}, which relates the Faltings height of a CM abelian variety to logarithmic derivatives 
of Artin $L$--functions at $s=0$. In fact, this paper was motivated in part by our effort to answer the following:   

\vspace{0.10in}

\textbf{Question}. \textit{Is the Colmez conjecture true for a random CM field?}

\vspace{0.10in}

To address this problem, the first two authors \cite[Section 1.2]{BSM16} 
developed a plan to study the Colmez conjecture from an arithmetic statistical 
point of view. Using the averaged Colmez conjecture, which was proved independently by 
Andreatta, Goren, Howard, and Madapusi Pera \cite{AGHM15}, and Yuan and Zhang \cite{YZ15}, the first two authors \cite[Theorem 1.4]{BSM16} deduced 
that the Colmez conjecture is true for $S_d$-Weyl CM fields. Then, they \cite[Theorem 1.9]{BSM16}
applied work of Cohen, Diaz y Diaz, and Olivier \cite{CDO02} 
to conclude that 100\% of quartic CM fields are $S_2$-Weyl, and consequently, satisfy the Colmez conjecture. 

Due to the well known difficulties which arise when counting number fields with Galois group $S_d$, 
this line of attack seemed limited initially to CM fields of small degree. 
Here we overcome these difficulties by first using the averaged Colmez conjecture and the Galois theory of CM fields 
to deduce that the Colmez conjecture is true for any $G$-Weyl CM field. Then, combined with our distribution results, 
we will conclude (conditional on Hypothesis \ref{WeakMalle}) that the 
Colmez conjecture is true for 100\% of CM fields of degree $2d$; in other words, the Colmez conjecture is  
true for a random CM field (see Theorem \ref{Colmez2}). Moreover, given the pairs 
$(d,G)$ for which Hypothesis \ref{WeakMalle} is known unconditionally, we will produce 
infinitely many density-one families of non-abelian CM fields which satisfy the Colmez conjecture
(see the results of Section \ref{applications}).

\subsection{The distribution of $G$-Weyl CM fields}\label{subsec:dist}

In order to state our distribution results, we first define some of the counting functions that will be used throughout the paper (all number 
fields are counted up to $\Q$-isomorphism). 

\vspace{0.10in}

\begin{itemize}

\item Let $N_d(X, G)$ count all number fields $K$ of degree $d$ and discriminant 
$|d_K| \leq X$ with $\Gal(K^c/\Q) \cong G$.
\vspace{0.10in}

\item Let $N_{2d}^{\textrm{cm}}(X, G)$ count all CM fields $E$ of degree $2d$ and discriminant $|d_E | \leq X$ 
which have a maximal totally real subfield $F$ with $\Gal(F^c/\Q) \cong G$. 
\vspace{0.10in}

\item Let $N_{2d}^{\textrm{Weyl}}(X, G)$ count all CM fields $E$ of degree $2d$ and discriminant 
$|d_E| \leq X$ which are $G$-Weyl.
\vspace{0.10in}

\item Let $$N_{2d}^{\textrm{Weyl}}(X):=\sum_{G \leq S_d}N_{2d}^{\textrm{Weyl}}(X,G).$$ 

\item Let $N_{2d}^{\textrm{cm}}(X)$ count all CM fields $E$ of degree $2d$ and discriminant $|d_E| \leq X$. 
\end{itemize}

\vspace{0.10in}

In \cite{Mal02}, Malle gave conjectural bounds for $N_d(X,G)$. If $g \in S_d$, the \textit{index} of $g$ is 
defined by 
\begin{align*}
\textrm{ind}(g):= d - \#(\{1, \ldots, d\}/\langle g \rangle).
\end{align*}
Let 
\begin{align*}
\textrm{ind}(G):=\min\{\textrm{ind}(g):~1 \neq g \in G\}
\end{align*}
and define the constant $0 < a(G):=\textrm{ind}(G)^{-1} \leq 1$. 
Malle conjectured that for any $\epsilon > 0$, there exist positive constants $c_1(G), c_2(G, \epsilon)$ such that 
\begin{align}\label{wmc}
 c_1(G) X^{a(G)}  \leq N_d(X, G) \leq c_2(G, \epsilon) X^{a(G)+\epsilon}.
\end{align} 
Malle \cite{Mal04} later refined this and gave a precise conjectural asymptotic formula for $N_d(X,G)$ as $X \rightarrow \infty$.

For our purposes, we only need an upper bound for $N_d(X,G)$ with 
an exponent which is \textit{much} weaker than what is predicted by (\ref{wmc}). 
This exponent will depend on bounds for $2$-torsion in class groups. 

For a number field $K$, let 
$\textrm{Cl}(K)[2]$ be the $2$-torsion subgroup of the ideal class group. Let $\delta_d \geq 0$ be a variable 
such that 
\begin{align}\label{torsion}
|\textrm{Cl}(K)[2]| \ll_{\epsilon, d} |d_K|^{\delta_d + \epsilon}
\end{align}
for all number fields $K$ of degree $d$.
By the Brauer-Siegel theorem, the bound (\ref{torsion}) holds with $\delta_d = 1/2$. 
Any bound (\ref{torsion}) with $ 0 < \delta_d < 1/2$ is called a non-trivial bound, and $\delta_d =0$ is the conjectured optimal 
bound. 

If $d=2$, then it is a classical result that (\ref{torsion}) holds with $\delta_2=0$.
The first non-trivial bounds in (\ref{torsion}) for $d \geq 3$  
were recently proved by Bhargava, Shankar, Taniguchi, Thorne, Tsimerman, and Zhang \cite{BSTTTZ17}. In particular, 
they proved that if $d=3,4$, then (\ref{torsion}) holds with $\delta_d=0.2784$, and if $d \geq 5$, 
then (\ref{torsion}) holds with $\delta_d=1/2-1/2d$.

With the variable $\delta_d$ as in (\ref{torsion}), we state 
the following weak form of the upper bound in Malle's conjecture (\ref{wmc}). 

\begin{hypothesis}\label{WeakMalle} For a fixed pair $(d,G)$ and $0 \leq \delta_d \leq 1/2$ satisfying (\ref{torsion}), we have
\begin{equation}\label{eq:malle}
N_d(X, G) \ll X^{M(G)}
\end{equation}
for some $M(G) > 0$ such that 
\begin{align*}
\delta_d + M(G) < 2.
\end{align*}
\end{hypothesis}

Our first result gives an asymptotic formula with a power-saving error term for the density of those CM fields 
counted by $N_{2d}^{\textrm{cm}}(X,G)$ which are $G$-Weyl.

\begin{theorem}\label{main_thm}
Assume that Hypothesis \ref{WeakMalle} is true for $(d, G)$. Then 
\begin{equation}\label{density1}
\frac{N_{2d}^{\mathrm{Weyl}}(X, G)}{N_{2d}^{\mathrm{cm}}(X, G)} = 1 + O_{d,G, \epsilon}(X^{-C_1(\delta_d, M(G)) + \epsilon}),
\end{equation}
where 
\begin{equation}\label{eq:def_C1}
C_1(\delta_d,M(G)):= 
\begin{cases}
1/2, & \textrm{if $\delta_d + M(G) \leq 1$}\\
\displaystyle 1- \frac{\delta_d + M(G)}{2}, &  \textrm{if $1 < \delta_d + M(G) < 2$}.
\end{cases}
\end{equation}
\end{theorem}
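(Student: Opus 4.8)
The plan is to estimate the difference $N_{2d}^{\mathrm{cm}}(X,G) - N_{2d}^{\mathrm{Weyl}}(X,G)$, which counts CM fields $E$ of degree $2d$ with totally real subfield $F$ having $\Gal(F^c/\mathbb{Q}) \cong G$ but with $\Gal(E^c/\mathbb{Q})$ a \emph{proper} subgroup of $C_2 \wr G$. First I would set up a parametrization: each such $E$ is determined by its totally real subfield $F$ together with a choice of totally imaginary quadratic extension $E = F(\sqrt{-\alpha})$ for a totally positive $\alpha$, up to the action of squares. The point is that the group $\Gal(E^c/\mathbb{Q})$ equals $C_2 \wr G$ unless the quadratic characters cutting out the various conjugates of $E$ over $F^c$ satisfy a nontrivial linear relation over $\mathbb{F}_2$ — equivalently, unless $E^c/F^c$ is \emph{not} a full elementary abelian $2$-extension of rank $d$. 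So the count of non-Weyl $E$ lying above a fixed $F$ is controlled by the number of relations, which in turn is bounded in terms of $|\mathrm{Cl}(F^c)[2]|$ and more refined ray class group data; summing a bound of the shape $|d_F|^{\delta + \epsilon}$ (coming from \eqref{torsion} applied to $F^c$, whose degree is bounded in terms of $d$) over all $F$ with $|d_F|$ in a suitable range gives the error term.

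The key steps, in order, are: (1) Fix $F$ totally real of degree $d$ with $\Gal(F^c/\mathbb{Q}) \cong G$; parametrize the CM extensions $E/F$ and identify the precise group-theoretic condition — in terms of the images of the associated quadratic characters in $F^{c\times}/(F^{c\times})^2$ — under which $\Gal(E^c/\mathbb{Q})$ fails to be all of $C_2 \wr G$. (2) Bound, for each such $F$, the number of CM fields $E$ of degree $2d$ above it with $|d_E| \le X$ that are non-Weyl, by relating the bad $E$'s to $2$-torsion in the class group (or appropriate ray class groups) of $F^c$ and to the count of small-norm elements, producing a bound of the form $O_\epsilon(|d_F|^{\delta_d + \epsilon} \cdot (X/|d_F|^2)^{\text{something}})$, using the conductor–discriminant relation $|d_E| \asymp |d_F|^2 \cdot \mathrm{N}(\mathfrak{f})$. (3) Sum over $F$: the number of totally real $F$ of degree $d$ with discriminant in a dyadic range $[Y, 2Y]$ and Galois group $G$ is $O(Y^{M(G)})$ by Hypothesis \ref{WeakMalle}; combining this with the per-field bound and carrying out the dyadic sum over $Y$ from $1$ to roughly $X^{1/2}$ gives a total of $O_\epsilon(X^{1 - C_1(\delta_d, M(G)) + \epsilon})$, where the two regimes in the definition of $C_1$ arise according to whether the sum is dominated by the contribution of fields $F$ with small discriminant (giving the $1/2$ from summing $X^{1/2}$-many terms of size $\sim X^{1/2}$... more precisely from $\sum_{Y \le X^{1/2}} Y^{M(G)} (X/Y^2)^{1/2}$ with $M(G) < 1$) or by those with large discriminant (when $1 < \delta_d + M(G) < 2$). (4) Finally, divide by $N_{2d}^{\mathrm{cm}}(X,G)$, which is $\gg X^{1/2}$ (from, e.g., the contribution of a single totally real $F$ with many CM extensions, or equivalently a trivial lower bound), to convert the error estimate on the difference into the stated relative error $O(X^{-C_1(\delta_d, M(G)) + \epsilon})$; the condition $\delta_d + M(G) < 2$ in Hypothesis \ref{WeakMalle} is exactly what guarantees $C_1 > 0$ so that this ratio tends to $1$.

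The main obstacle I expect is step (2): getting a clean bound for the number of \emph{non-Weyl} CM extensions of a fixed $F$ with bounded discriminant. The naive count of \emph{all} CM extensions $E/F$ with $|d_E| \le X$ is roughly $(X/|d_F|^2)^{1/2} \cdot (\text{class-group / unit factor})$, and one must show that imposing the degeneracy condition "$\Gal(E^c/\mathbb{Q}) \subsetneq C_2 \wr G$" forces $E$ into a sparse subfamily. The degeneracy forces the quadratic extensions $E^{(1)}, \dots, E^{(d)}$ of $F^c$ (the conjugates) to be linearly dependent, meaning some nonempty product $\prod_{i \in S} \alpha^{(i)}$ is a square in $F^c$; controlling the number of $\alpha$ (up to squares and units, with totally positive real conjugates and bounded norm) for which this happens is where one needs the $2$-torsion bound \eqref{torsion} for $F^c$ together with a subconvexity-type input for ray class $L$-functions of $F^c$ to handle the count of elements of bounded norm in a given square class. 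I would handle this by working over $F^c$ rather than $F$, where the relevant $2$-extensions literally become subgroups of $F^{c\times}/(F^{c\times})^2$, expressing the number of bad square classes via $|\mathrm{Cl}(F^c)[2]|$ times a ray-class-group factor, and then counting bad $E$'s over $F$ by descent; the bookkeeping relating the discriminant of $E$ over $\mathbb{Q}$ to these $F^c$-data, while keeping all implied constants depending only on $d$ and $G$, is the part requiring the most care.
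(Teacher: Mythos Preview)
Your overall architecture --- bound $N_{2d}^{\neg\Weyl}(X,G) := N_{2d}^{\cm}(X,G) - N_{2d}^{\Weyl}(X,G)$ from above, then divide by $N_{2d}^{\cm}(X,G)$ --- is exactly what the paper does. But two of your steps, as written, would not go through.

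\textbf{Step (4) uses too weak a denominator.} You claim $N_{2d}^{\cm}(X,G) \gg X^{1/2}$; but with a numerator of size $X^{1-C_1+\epsilon}$ and $C_1 \le 1/2$, dividing by $X^{1/2}$ gives only $X^{1/2 - C_1 + \epsilon}$, which is not $o(1)$. You need $N_{2d}^{\cm}(X,G) \gg X$. In fact your own parenthetical reasoning already gives this: a single fixed $F$ contributes $\asymp X/d_F^2$ totally imaginary quadratic extensions $E/F$ with $|d_E| \le X$ (the count of quadratic extensions of $F$ with $\mathcal{N}_{F/\Q}(\mathfrak{D}_{E/F}) \le Y$ is linear in $Y$, not $Y^{1/2}$). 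The paper proves the full asymptotic $N_{2d}^{\cm}(X,G) \sim r_d(G)X$ via the Dirichlet series \eqref{eqn:def_phicm}.

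\textbf{Step (2) targets the wrong class group and is more involved than necessary.} You propose to work over $F^c$, detect non-Weyl via linear dependence of the conjugate quadratic characters in $F^{c\times}/(F^{c\times})^2$, and invoke $|\Cl(F^c)[2]|$ together with subconvexity. But the bound \eqref{torsion} is for degree-$d$ fields with exponent $\delta_d$; applied to $F^c$, which has degree $|G|$ and discriminant a power of $d_F$, it yields $\delta_{|G|}$ and extra powers of $d_F$, not the $\delta_d$ appearing in $C_1(\delta_d,M(G))$. The paper avoids $F^c$ entirely. Its key input is a ramification consequence of your linear-dependence condition, due to Kl\"uners: if $\Gal(L^c/\Q) \ne C_2 \wr G$ and $p$ is a rational prime unramified in $K/\Q$ but ramified in $L/K$, then necessarily $p^2 \mid d_L$. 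Hence $\mathcal{N}_{K/\Q}(\mathfrak{D}_{L/K}) = ab^2$ with $a$ supported on primes dividing $d_K$. Combined with the elementary bound that the number of quadratic $L/K$ with $\mathcal{N}_{K/\Q}(\mathfrak{D}_{L/K}) = n$ is $O_{d,\epsilon}(|\Cl(K)[2]|\, n^{\epsilon})$ (ray class group $2$-torsion), this immediately gives the per-$F$ bound $\ll X^{1/2+\epsilon} d_F^{-1} |\Cl(F)[2]|$, and now \eqref{torsion} applies to $F$ itself. Your dyadic summation in step (3) then finishes the job. No subconvexity is needed for this theorem; the bound \eqref{subconvex} enters only in Theorem~\ref{WeylDensity}.
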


The Hypothesis \ref{WeakMalle} is known in many cases due to work of the following 
authors on the Malle conjectures: \cite{DH71, Mak85, CDO02, KM04, Bha05, KY05, EV06, BW08, Bha10, CT17, Wan17, Klu12}.
For convenience, we have summarized these results 
in Table \ref{Table1} of Section \ref{known}.

Given these known cases of Hypothesis \ref{WeakMalle}, we get the following unconditional results.

\begin{corollary}\label{Gpairs} 
The asymptotic formula \eqref{density1} holds unconditionally for the following pairs $(d,G)$: 
\begin{itemize}
\item Any $(d, G)$ with $G$ abelian.
\item Any $(d,G)$ with $d=\ell$ prime and $G=D_{\ell}$ dihedral.
\item Any $(d,G)$ with $G$ a $p$-group.
\item Any $(d,G)$ with $d \geq 5$ and $|G|=d$.
\item  Any $(d,G)$ with $d\leq 5$.
\item Any $(d,G)$ with $d=3|A|$ and $G=S_3 \times A$ with $A$ abelian.
\item Any $(d,G)$ with $d=4|A|$ and $G=S_4 \times A$ with $A$ abelian.
\item Any $(2d, C_2 \wr G)$, when $(d, G)$ is on this list.
\end{itemize}
\end{corollary}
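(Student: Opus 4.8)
The plan is to verify, case by case, that each listed pair $(d,G)$ satisfies Hypothesis \ref{WeakMalle}, and then invoke Theorem \ref{main_thm}. The structure of the argument is entirely uniform: for each family we must exhibit an exponent $M(G)$ with $N_d(X,G) \ll X^{M(G)}$ — typically the Malle-conjecture exponent $a(G)$ (possibly with an $\epsilon$, which is harmless) or a known unconditional bound from the literature — and then check the inequality $\delta_d + M(G) < 2$, where $\delta_d$ is the admissible $2$-torsion exponent from \eqref{torsion}. The relevant values are $\delta_2 = 0$, $\delta_3 = \delta_4 = 0.2784$, and $\delta_d = 1/2 - 1/(2d)$ for $d \geq 5$ (so $\delta_d < 1/2$ always, and in particular $\delta_d + M(G) < 2$ whenever $M(G) \leq 3/2$, which will absorb most of the work).

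First I would dispatch the "small" and structurally simple cases. For $G$ abelian, class field theory (the work going back to Mäki, and the sharp asymptotics of Wright) gives $N_d(X,G) \ll X^{a(G)+\epsilon}$ with $a(G) \leq 1$, so $\delta_d + M(G) < 1 + 1/2 < 2$. For $d = \ell$ prime and $G = D_\ell$, the relevant count of dihedral fields is governed by counting quadratic fields together with their class-group data; the required bound $N_\ell(X, D_\ell) \ll X^{1/2 + \epsilon}$ (or a mild power thereof) follows from the cited dihedral results, and again $\delta_\ell + M(G)$ is comfortably below $2$. For $G$ a $p$-group one uses the bound of Klüners–Malle establishing Malle's conjectural upper bound for nilpotent groups, giving $M(G) = a(G) + \epsilon \leq 1 + \epsilon$. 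For $d \geq 5$ with $|G| = d$ — i.e. the $G$-fields are Galois — Proposition 1.3 of \cite{EV06} gives a power-saving upper bound with exponent strictly less than $1$, so $\delta_d + M(G) < 1/2 + 1 < 2$. For all $(d,G)$ with $d \leq 5$ one simply reads off the known cases of Malle's upper bound from Table \ref{Table1}: every transitive subgroup of $S_d$ for $d \leq 5$ has a known upper bound $N_d(X,G)\ll X^{M(G)}$ with $M(G)$ small enough (the worst case being $S_5$, where the sharp result of \cite{Bha10} gives $M(S_5) = 1$), and combined with $\delta_d \leq 0.2784$ for $d = 3,4$ and $\delta_5 = 2/5$ the inequality $\delta_d + M(G) < 2$ holds throughout.

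Finally, for the two product families $G = S_3 \times A$ (with $d = 3|A|$) and $G = S_4 \times A$ (with $d = 4|A|$), $A$ abelian, I would use a counting bound for direct products: a $G$-field is built from an $S_3$- (resp. $S_4$-) field and an abelian $A$-field, and one gets an upper bound of the shape $N_d(X, G) \ll X^{M(G)+\epsilon}$ where $M(G)$ is $a(S_3 \times A)$ (resp. $a(S_4 \times A)$); since $a(S_3) = 1$, $a(S_4) = 1$, and $a(A) \leq 1$, one checks that $a(G) < 2$ with room to spare, and in fact $a(G) \leq 1$ in these cases, so $\delta_d + M(G) < 1 + 1/2 < 2$ once more. (One must be slightly careful that the $\epsilon$ appearing in these upper bounds does not push the exponent past the threshold, but since every $M(G)$ above is at most $1$ and $\delta_d \leq 1/2$, there is always a gap of at least $1/2 - \epsilon$, so $\epsilon$ is immaterial.) The main obstacle — such as it is — is purely bookkeeping: one must track down the precise statement and exponent of the relevant Malle-type upper bound in each cited paper and confirm the numerics $\delta_d + M(G) < 2$; there is no conceptual difficulty, since in every listed case the known upper-bound exponent $M(G)$ is at most $1$ (often exactly $a(G) \leq 1$), while $\delta_d$ never exceeds $1/2$, leaving a uniform slack well below the bound $2$ demanded by Hypothesis \ref{WeakMalle}. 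Theorem \ref{main_thm} then applies verbatim, yielding \eqref{density1} unconditionally for each such $(d,G)$. $\qed$
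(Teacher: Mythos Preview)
Your proposal is correct and follows essentially the same approach as the paper: verify Hypothesis \ref{WeakMalle} case by case using known Malle-type upper bounds, then invoke Theorem \ref{main_thm}. The paper's own argument is even more compressed --- it simply points to Table \ref{Table1} in Section \ref{known}, which records the relevant exponents $M(G)$ and references, and observes that the trivial value $\delta_d = 1/2$ already gives $\delta_d + M(G) < 2$ in every listed case (so the refined \cite{BSTTTZ17} values you invoke are not needed). One small imprecision: your claimed bound $N_\ell(X,D_\ell) \ll X^{1/2+\epsilon}$ is not what is actually known for small primes --- the table records $M(D_\ell) = \tfrac{3}{\ell-1} - \tfrac{1}{\ell(\ell-1)} + \epsilon$, which for $\ell=3$ is $4/3+\epsilon$ --- but your hedge ``or a mild power thereof'' and the conclusion $\delta_\ell + M(D_\ell) < 2$ remain valid.
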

\vspace{0.025in}

\begin{remark} In the 4th bullet of Corollary \ref{Gpairs}, the condition $|G|=d$ is equivalent 
to all number fields counted by $N_{d}(X,G)$ being Galois over $\Q$. This case follows from Ellenberg and Venkatesh 
\cite[Proposition 1.3]{EV06}.
\end{remark}

\begin{remark} The Malle conjecture can be formulated more generally for degree $d$ extensions $L/K$ of any 
global field $K$ with $\mathcal{N}_{K/\Q}(\frak{D}_{L/K}) < X$ and Galois group $\textrm{Gal}(L/K) \cong G$ for a transitive subgroup 
$G \leq S_d$ (here $\frak{D}_{L/K}$ is the relative discriminant). In this setting, Ellenberg, Tran, and Westerland \cite{ETW17} 
recently proved the upper bound in Malle's Conjecture when $K=\mathbb{F}_q(t)$ is the rational function field.
\end{remark}

\begin{example} If $(d,G)=(5,S_5)$, then   
Hypothesis \ref{WeakMalle} is true for the pair $(\delta_5, M(S_5))=(2/5,1)$. Since 
$C_1(2/5,1)=3/10$, we have   
\begin{align*}
\frac{N_{10}^{\mathrm{Weyl}}(X, S_5)}{N_{10}^{\mathrm{cm}}(X, S_5)} = 1 + O_{\epsilon}(X^{-\frac{3}{10} + \epsilon}).
\end{align*}
\end{example}

\begin{remark} Assuming a sufficiently strong value for the exponent 
$\delta_d$ appearing in the $2$-torsion bound (\ref{torsion}),  the asymptotic formula (\ref{density1}) is known for 
some additional pairs $(d,G)$; see Table \ref{Table4}.
\end{remark}

Our next goal is to give an asymptotic formula with a power-saving error term for the density of those CM 
fields counted by $N_{2d}^{\textrm{cm}}(X)$ which are $G$-Weyl. 
This will involve the subconvexity problem for a certain family of Hecke $L$--functions for totally real fields. 

Let $F$ be a totally real field of degree $d$. Let    
$\mathfrak{c}$ be an integral ideal of $F$ dividing $2$, and let $\mathfrak{c}_{\infty} \subset \mathfrak{m}_{\infty}$ 
be a subset of the set $\mathfrak{m}_{\infty}$ of real places of $F$. Suppose that $\chi$ is a primitive character of the 
ray class group $\Cl_{\mfc^2 \mfc_\infty}(F)$ modulo $\mfc^2 \mfc_\infty$. The $L$--function of $\chi$ is defined by 
\begin{align*} 
L_F(\chi,s):=\prod_{\frak{p}}\left(1-\chi(\frak{p})\mathcal{N}_{F/\Q}(\frak{p})^{-s}\right)^{-1}, \quad \textrm{Re}(s) > 1.
\end{align*}
The completed $L$--function is defined by (see e.g. \cite[p. 129]{IK04})
\begin{align*}
\Lambda_F(\chi,s):=q(\chi)^{s/2}\gamma(\chi,s)L_F(\chi,s),
\end{align*}
where $q(\chi):=d_F\mathcal{N}_{F/\Q}(\frak{c}^2)$ and 
\begin{align*}
\gamma(\chi,s):=\pi^{-ds/2}\Gamma\left(\frac{s}{2}\right)^{d-|\frak{c}_{\infty}|}\Gamma\left(\frac{s+1}{2}\right)^{|\frak{c}_{\infty}|}.
\end{align*}
The completed $L$--function satisfies the functional equation
\begin{align*}
\Lambda_F(\chi,s)=\varepsilon(\chi)\Lambda_F(\overline{\chi},1-s),
\end{align*}
where the root number $\varepsilon(\chi)$ is a complex number of modulus 1 which can be written explicitly as a normalized 
Gauss sum for $\chi$. Given this data, we calculate the analytic conductor of $L_F(\chi,s)$ as (a slightly weaker version of \cite[eq. (5.7)]{IK04})
\begin{align*}
\mathfrak{q}(F,\chi,s)=d_F\mathcal{N}_{F/\Q}(\mfc^2)(|s| + 4)^d.
\end{align*}

Let $\delta^{\prime} \geq 0$ be a variable such that 
\begin{align}\label{subconvex}
\left(\frac{s-1}{s+1}\right)^{a(\chi)}L_F(\chi,s) \ll_{\epsilon, d} \mathfrak{q}(F,\chi,s)^{\delta^{\prime}(1-\sigma)+\epsilon}, \quad 
1/2 \leq \sigma:=\textrm{Re}(s) \leq 1 + \epsilon,
\end{align}
where $a(\chi)=1$ if $\chi$ is the trivial character 
and $a(\chi)=0$ if $\chi$ is non-trivial.
The bound (\ref{subconvex}) holds when $\delta^{\prime} = 1/2$ (the convexity bound). Any bound 
(\ref{subconvex}) with $0 < \delta^{\prime} < 1/2$ is called a subconvexity bound, and $\delta^{\prime}=0$ is the Lindel\"of hypothesis. 
For more details concerning these facts, see \cite[Chapter 5]{IK04}.

\begin{remark} A subconvexity bound of the form (\ref{subconvex}) is known, for example, 
if $F$ is either abelian or cubic (see e.g. the summary of results in \cite[Appendix A]{ELMV11}).   
\end{remark}

Now, let $D_{G}^{\textrm{cm}}(s)$ be the Dirichlet series which enumerates all fields counted 
by $N_{2d}^{\text{cm}}(X, G)$ (see (\ref{eqn:def_phicm})). In Theorem \ref{thm:kl5}, we will prove that if 
Hypothesis \ref{WeakMalle} is true for $(d,G)$ and $0 \leq \delta^{\prime} \leq 1/2$ satisfies (\ref{subconvex}), 
then $D_G^{\textrm{cm}}(s)$ has a meromorphic continuation to a half-plane $\textrm{Re}(s) > \alpha$ 
for some $\alpha < 1$ (depending on $\delta_d, M(G)$ and $\delta^{\prime}$) 
with only a single (simple) pole at $s=1$. Moreover, the residue of $D_G^{\textrm{cm}}(s)$ at $s=1$ is given by the convergent series 
\begin{align}\label{residue1}
r_d(G):=\sum_{F \in \mathcal{F}_G^{+}}\frac{\textrm{Res}_{s=1}\zeta_F(s)}{2^dd_F^2\zeta_F(2)} > 0,
\end{align}
where 
\begin{align*}
\mathcal{F}_G^{+} :=\{F/\Q:~ \textrm{$F$ totally real of degree $d$}, ~\textrm{Gal}(F^c/\Q) \cong G \}.
\end{align*}

Using properties of the Dirichlet series $D_{G}^{\textrm{cm}}(s)$ and an upper bound for the 
number of CM fields counted by $N_{2d}^{\cm}(X)$ which are 
\textit{not} $G$-Weyl for any transitive subgroup $G \leq S_d$, we will prove the following 
asymptotic formulas with power-saving error terms.

\begin{theorem}\label{WeylDensity}
Assume that Hypothesis \ref{WeakMalle} is true for every pair $(d, G)$ where $G$ ranges over all  
transitive subgroups $G \leq  S_d$. Moreover, assume that $0 \leq \delta^{\prime} \leq 1/2$ satisfies (\ref{subconvex}). Then for 
any such $G_0 \leq S_d$, we have 
\begin{align}\label{full1}  
\frac{N_{2d}^{\Weyl}(X, G_0)}{N_{2d}^{\cm}(X)} = \frac{r_d(G_0)}{\sum_{G \leq S_d}r_d(G)}
+ O_{d,G_0,\epsilon}(X^{-C_2(\delta_d, M(G_0), \delta^{\prime}) + \epsilon}), 
\end{align}
and 
\begin{align}\label{full2}
\frac{N_{2d}^{\mathrm{Weyl}}(X)}{N_{2d}^{\mathrm{cm}}(X)} = 1 
+ O_{d,\epsilon}(X^{-C_3(\delta_d, \delta^{\prime}) + \epsilon}),
\end{align}
where $C_2(\delta_d, M(G), \delta^{\prime}) > 0$ and $C_3(\delta_d, \delta^{\prime}) > 0$ are explicit constants defined in \eqref{eq:def_C2} 
and \eqref{eq:def_C3}, respectively.
\end{theorem}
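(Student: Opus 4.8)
The plan is to build Theorem \ref{WeylDensity} on top of the single-$G$ result (Theorem \ref{main_thm}) and the analytic properties of the Dirichlet series $D_G^{\cm}(s)$ described just before the statement. First I would establish \eqref{full1} by writing
\begin{align*}
\frac{N_{2d}^{\Weyl}(X,G_0)}{N_{2d}^{\cm}(X)} = \frac{N_{2d}^{\Weyl}(X,G_0)}{N_{2d}^{\cm}(X,G_0)}\cdot \frac{N_{2d}^{\cm}(X,G_0)}{\sum_{G\leq S_d}N_{2d}^{\cm}(X,G) + E(X)},
\end{align*}
where $E(X)$ counts those CM fields of degree $2d$ whose maximal totally real subfield $F$ has $\Gal(F^c/\Q)$ not transitive-of-degree-$d$ — but in fact every degree-$d$ field has such a Galois group, so the only genuine discrepancy between $N_{2d}^{\cm}(X)$ and $\sum_G N_{2d}^{\cm}(X,G)$ is that $N_{2d}^{\cm}(X)$ also includes CM fields whose totally real subfield has degree $<d$; I would absorb those into the error via a crude count. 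The first factor is $1+O(X^{-C_1+\epsilon})$ by Theorem \ref{main_thm}. For the remaining ratio I would invoke the meromorphic continuation of each $D_G^{\cm}(s)$ to $\Re(s)>\alpha$ with a simple pole at $s=1$ of residue $r_d(G)$, apply a Tauberian argument (Perron's formula together with the subconvexity input \eqref{subconvex} and the $2$-torsion bound \eqref{torsion}, exactly as in the proof of Theorem \ref{thm:kl5}) to get $N_{2d}^{\cm}(X,G) = r_d(G)X + O(X^{\alpha_G+\epsilon})$ for each of the finitely many $G\leq S_d$, and then sum. Dividing, the main term is $r_d(G_0)/\sum_G r_d(G)$ and the error is governed by the worst of the exponents, which defines $C_2$.

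Next I would deduce \eqref{full2} by summing \eqref{full1} over all transitive $G\leq S_d$. Since
\begin{align*}
N_{2d}^{\Weyl}(X) = \sum_{G\leq S_d} N_{2d}^{\Weyl}(X,G),
\end{align*}
summing the numerators in \eqref{full1} over the (finitely many) $G$ gives main term $\sum_{G_0} r_d(G_0)/\sum_G r_d(G) = 1$, so \eqref{full2} follows with $C_3$ equal to the minimum over $G_0$ of $C_2(\delta_d,M(G_0),\delta^{\prime})$, possibly after also accounting for the small-degree-subfield CM fields in $N_{2d}^{\cm}(X)$; a direct estimate shows there are $O(X^{1/2+\epsilon})$ such fields (they are built from a totally real field of degree $e\mid d$, $e<d$, and an imaginary quadratic extension, hence are parametrized by boundedly many pairs with a power-saving count), which is negligible relative to $N_{2d}^{\cm}(X)\asymp X$. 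Alternatively, \eqref{full2} can be proved more directly by combining Theorem \ref{main_thm}, summed over $G$, with an upper bound for the number of CM fields counted by $N_{2d}^{\cm}(X)$ that are \emph{not} $G$-Weyl for any $G$; this is the route suggested by the text, and it is cleaner because it only needs $N_{2d}^{\cm}(X,G)\ll X$ (not a precise asymptotic for the denominator) together with the non-Weyl upper bound.

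The bookkeeping for the error exponents $C_2$ and $C_3$ — tracking how $\delta_d$, $M(G)$, and $\delta^{\prime}$ propagate through the Perron/Tauberian step and combine with $C_1(\delta_d,M(G))$ from Theorem \ref{main_thm} — is routine but must be done carefully to match the definitions in \eqref{eq:def_C2} and \eqref{eq:def_C3}; I would organize it by isolating the contour-shift exponent $\alpha_G$ coming from the abscissa of holomorphy of $D_G^{\cm}(s)$ minus $1$, which in turn depends on $\delta^{\prime}$ through the subconvex bound applied to the ray class $L$-functions appearing in the Euler product for $D_G^{\cm}$, and then taking $C_2 = \min(C_1(\delta_d,M(G_0)), 1-\alpha_{\max})$ where $\alpha_{\max}=\max_G \alpha_G$. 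The main obstacle, and the only place where real work is hidden, is the uniform control of the error term when passing from the individual asymptotics to the ratio: one needs the denominator $N_{2d}^{\cm}(X)$ to be genuinely of size $\asymp X$ (so that dividing a power-saving error by it preserves the power saving), which follows since $r_d(G)>0$ for at least one $G$ and the residue series \eqref{residue1} converges, and one needs the non-$G$-Weyl CM fields to be a negligible power-saving set — the latter being precisely the content of the auxiliary upper bound alluded to before the theorem, which is where Hypothesis \ref{WeakMalle} for \emph{all} $G\leq S_d$ is used, since a CM field failing to be $G$-Weyl for its $G$ has Galois group a proper subgroup of $C_2\wr G$ and hence (after a short group-theoretic analysis) is controlled by counting totally real $G'$-fields for various $G'$ together with controlled quadratic data.
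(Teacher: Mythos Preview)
Your approach is essentially the same as the paper's: decompose $N_{2d}^{\Weyl}(X,G_0)/N_{2d}^{\cm}(X)$ through the intermediate quantity $N_{2d}^{\cm}(X,G_0)$, handle the first factor via Theorem \ref{main_thm} (equivalently, the non-Weyl upper bound), obtain an asymptotic $N_{2d}^{\cm}(X,G)=r_d(G)X+O(X^{\beta+\epsilon})$ for each $G$ via a Perron-type argument applied to $D_G^{\cm}(s)$ (this is the paper's Theorem \ref{residue}), then sum over $G$ and divide; \eqref{full2} then follows by summing \eqref{full1} over $G_0$, exactly as you say.

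One correction, harmless to the argument but worth noting: your term $E(X)$ is identically zero. A CM field $E$ of degree $2d$ has, by definition, a unique maximal totally real subfield $F$ with $[E:F]=2$, hence $[F:\Q]=d$; its Galois group $\Gal(F^c/\Q)$ is then automatically a transitive subgroup of $S_d$. Thus $N_{2d}^{\cm}(X)=\sum_{G\leq S_d}N_{2d}^{\cm}(X,G)$ holds exactly, and the ``small-degree-subfield'' discussion can be deleted. Also, the error exponent coming out of the contour shift is not $\alpha_G$ itself but the optimized $\beta(\delta_d,M(G),\delta')$ of \eqref{betadef}, which interpolates between $\alpha$ and $1$ via the smoothing parameter $Y$; your $C_2=\min(C_1,\,1-\alpha_{\max})$ should accordingly read $C_2=\min(C_1,\,1-\beta(\delta_d,\delta'))$, matching \eqref{eq:def_C2}.
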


In Table \ref{Table2} we give numerical computations for the residue $r_d(G)$, and hence for the relative density
of $G$-Weyl CM fields, for each transitive $G \leq S_d$ with $d \leq 5$. We computed these by
summing the series of \eqref{residue1} over the first $n$ fields $F \in \mathcal{F}_G^+$, for $n$ listed in the table.
The basic field data was downloaded from the website \url{lmfdb.org} \cite{LMFDB}, and the remaining computations,
including the $L$-function computations in \eqref{residue1}, were handled with PARI/GP \cite{PARI}. The (short) PARI/GP source code
with which we put these computations together may be downloaded at the third author's website\footnote{\url{http://people.math.sc.edu/thornef}}. 

From \eqref{residue1} we see that the residues are (very) approximately given by 
$2^{-d} \sum_F d_F^{-2}.$
Assuming Malle's conjecture \eqref{wmc}, the series converge relatively rapidly; and indeed it is known
that $N_d(X, G) \ll X$ for all $(d, G)$ listed in the table. With some effort, it should be possible to explicitly bound the error
in our residue computations below; numerics suggest that these values are likely to be accurate within approximately
 $\pm 1$ in the least significant digit listed.

We observe: each totally real $F$ contributes a positive proportion to its respective residue, with those
of smallest discriminant making the largest contribution; also, the residues are decreasing with $d$ -- a pattern which should persist, in light of
lower bounds on $d_F$ which are exponential in $d$ \cite{odlyzko}.

\begin{table}[H]
\caption{Values of $r_d(G)$ for $d \leq 5$} \label{Table2} 
\vspace{0.10in}
{\tabulinesep=1.0mm
\begin{tabu}{ |c|c|c|c|c|c| }
\hline
$d$ & $G$ & Number of fields & Minimal discriminant & Residue & Proportion in \eqref{full1} \\ 
\hline
2 & $C_2$ & 100,000 & $5$ & $0.009856$ & - \\ \hline
3 & & 25,000 & $49$ & $3.30 \times 10^{-5}$ & - \\
& $C_3$ & 107 & $49$ & $2.29 \times 10^{-5}$ & $0.69$\\
& $S_3$ & 24,893 & $148$ & $1.01 \times 10^{-5}$ & $0.31$ \\ \hline
4 & & 25,000 & $725$ & $1.24 \times 10^{-7}$ & - \\
& $C_4$ & 75 & $1125$ & $2.41 \times 10^{-8}$ & $0.19$ \\
& $V_4$ & 289 & $1600$ & $1.56 \times 10^{-8}$ & $0.13$ \\
& $D_4$ & 8147 & $725$ & $5.9 \times 10^{-8}$ & $0.48$ \\
& $A_4$ & 45 & $26569$ & $9.3 \times 10^{-11}$ & $0.0008$ \\
& $S_4$ & 16,444 & $1957$ & $2.5 \times 10^{-8}$ & $0.20$ \\ \hline
5 & & 25,000 & $14641$ & $1.05 \times 10^{-10}$ & - \\
& $C_5$ & 5 & $14641$ & $3.08 \times 10^{-11}$ & $0.29$ \\
& $D_5$ & 28 & $160801$ & $4.24 \times 10^{-13}$ & $0.003$ \\
& $F_5$ & 15 & $2382032$ & $9 \times 10^{-15}$ & $0.00009$ \\
& $A_5$ & 21 & $3104644$ & $5 \times 10^{-15}$ & $0.00005$ \\
& $S_5$ & 24,931 & $24217$ & $7.4 \times 10^{-11}$  & $0.70$ \\ \hline
\end{tabu}}
\end{table}

\begin{example} Let $(d,G)$ be any pair with $d=5$. Then Hypothesis \ref{WeakMalle} is true 
for the pair $(\delta_5,M(G))=(2/5,1)$. If we take $\delta^{\prime}=1/2$, then we have
\small
\begin{align*}
C_1(\delta_5, M(G)) \geq \frac{1}{4}, \
\alpha(\delta_5, M(G), \delta') \leq \frac{19}{25}, \
\beta(\delta_5, \delta') = \frac{17}{20}, \
C_2(\delta_d, M(G), \delta') = \frac{3}{20}, \
C_3(\delta_d, \delta') = \frac{3}{20},
\end{align*}
\normalsize
these constants being defined in 
\eqref{eq:def_C1},
\eqref{alphadef},
\eqref{betadef},
\eqref{eq:def_C2}, and
\eqref{eq:def_C3} respectively. We conclude that
\begin{align*}
\frac{N_{10}^{\mathrm{Weyl}}(X)}{N_{10}^{\mathrm{cm}}(X)} = 1 
+ O_{\epsilon}(X^{-\frac{3}{20} + \epsilon}).
\end{align*}

Moreover, if we assume the conjectured optimal bound in (\ref{torsion}) and the 
Lindel\"of hypothesis in (\ref{subconvex}) (so that 
Hypothesis \ref{WeakMalle} is true for the pair $(\delta_5,M(G))=(0,1)$ and $\delta^{\prime}=0$), then  
\begin{align*}
\frac{N_{10}^{\mathrm{Weyl}}(X)}{N_{10}^{\mathrm{cm}}(X)} = 1 
+ O_{\epsilon}(X^{-\frac{1}{2} + \epsilon}).
\end{align*}
\end{example}

Since Hypothesis \ref{WeakMalle} is known for every pair $(d,G)$ with $d \leq 5$ (see Corollary \ref{Gpairs}), we get the following 
unconditional result.

\begin{corollary}\label{densityall} 
If $d \leq 5$, then $(\ref{full1})$ and $(\ref{full2})$ hold unconditionally. 
In particular, if $d \leq 5$, then the set  
\begin{align*}
\bigcup_{G \leq S_d}\{\textrm{$G$-Weyl CM fields of degree $2d$}\}
\end{align*}
of all Weyl CM fields of degree $2d$ comprises 100\% of all CM fields of degree $2d$. 
\end{corollary}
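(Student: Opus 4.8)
The plan is to derive Corollary~\ref{densityall} as an immediate consequence of Corollary~\ref{Gpairs}, Theorem~\ref{WeylDensity}, and the known subconvexity input for fields of small degree. First I would observe that Corollary~\ref{Gpairs} already lists ``any $(d,G)$ with $d \leq 5$'' among the pairs for which Hypothesis~\ref{WeakMalle} is known unconditionally; since for a fixed $d \leq 5$ there are only finitely many transitive subgroups $G \leq S_d$, this means Hypothesis~\ref{WeakMalle} holds for \emph{every} pair $(d,G)$ with $G \leq S_d$ transitive, which is exactly the hypothesis required to invoke Theorem~\ref{WeylDensity}. Second, I would supply the subconvexity hypothesis: Theorem~\ref{WeylDensity} also requires some $0 \leq \delta' \leq 1/2$ satisfying~\eqref{subconvex}, and this is available unconditionally --- indeed $\delta' = 1/2$ is the convexity bound, which always holds, so no subconvexity is strictly needed for the qualitative ``$100\%$'' statement (subconvexity only improves the power-saving exponent $C_3$). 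With both hypotheses verified, \eqref{full1} and \eqref{full2} of Theorem~\ref{WeylDensity} hold unconditionally for $d \leq 5$, which is the first assertion of the corollary.

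For the ``in particular'' clause, I would argue as follows. The set $\bigcup_{G \leq S_d}\{\textrm{$G$-Weyl CM fields of degree } 2d\}$ is counted by $N_{2d}^{\mathrm{Weyl}}(X)$, and the set of all CM fields of degree $2d$ is counted by $N_{2d}^{\mathrm{cm}}(X)$. By~\eqref{full2} we have
\begin{align*}
\frac{N_{2d}^{\mathrm{Weyl}}(X)}{N_{2d}^{\mathrm{cm}}(X)} = 1 + O_{d,\epsilon}(X^{-C_3(\delta_d,\delta') + \epsilon}),
\end{align*}
and since $C_3(\delta_d,\delta') > 0$ (as asserted in the statement of Theorem~\ref{WeylDensity}), letting $X \to \infty$ gives
\begin{align*}
\lim_{X \to \infty} \frac{N_{2d}^{\mathrm{Weyl}}(X)}{N_{2d}^{\mathrm{cm}}(X)} = 1.
\end{align*}
By the definition of natural density this says precisely that the Weyl CM fields of degree $2d$ comprise $100\%$ of all CM fields of degree $2d$, ordered by discriminant.

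One point that deserves a remark rather than a real obstacle: the union $\bigcup_{G \leq S_d}$ ranges over transitive subgroups, and one should check that every CM field $E$ of degree $2d$ has a maximal totally real subfield $F$ of degree $d$ whose Galois group $\Gal(F^c/\Q)$ is a transitive subgroup of $S_d$ --- this is standard, since $F/\Q$ is a degree-$d$ field so $\Gal(F^c/\Q)$ acts transitively on the $d$ embeddings of $F$. Thus $N_{2d}^{\mathrm{cm}}(X) = \sum_{G \leq S_d} N_{2d}^{\mathrm{cm}}(X,G)$, and the Weyl condition $\Gal(E^c/\Q) = C_2 \wr G$ partitions the $G$-Weyl fields by this $G$, so $N_{2d}^{\mathrm{Weyl}}(X) = \sum_{G \leq S_d} N_{2d}^{\mathrm{Weyl}}(X,G)$ is consistent with the displayed definition. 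The only genuinely substantive work has already been done --- it is packaged in Theorem~\ref{WeylDensity} and in the verification of Hypothesis~\ref{WeakMalle} recorded in Corollary~\ref{Gpairs} --- so the proof of Corollary~\ref{densityall} itself is a short deduction; the ``hard part'' is entirely upstream, in proving Theorem~\ref{WeylDensity} and in citing the literature on the Malle conjecture for $d \leq 5$.
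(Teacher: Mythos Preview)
Your proposal is correct and takes essentially the same approach as the paper. The paper's proof is just the single sentence preceding the corollary, noting that Hypothesis~\ref{WeakMalle} is known for every pair $(d,G)$ with $d \leq 5$ by Corollary~\ref{Gpairs}; your write-up supplies the same deduction with more detail, including the (correct) observation that the convexity bound $\delta' = 1/2$ suffices for the $\delta'$-hypothesis in Theorem~\ref{WeylDensity}.
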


\subsection{Application to the Colmez conjecture}\label{applications} 

As discussed, we will use the 
averaged Colmez conjecture \cite{AGHM15, YZ15} and the Galois theory of CM fields to deduce the following:

\begin{theorem}\label{ColmezWeyl} If $E$ is a $G$-Weyl CM field, then the Colmez conjecture is true for $E$. 
In particular, if $X$ is an abelian variety of dimension $d$ with complex multiplication by a $G$-Weyl CM field $E$ of degree $2d$ 
with maximal totally real subfield $F$, then the Faltings height of $X$ is given by 
\begin{align}\label{nice}
h_{\mathrm{Fal}}(X)= -\frac{1}{2}\frac{L^{\prime}(\chi_{E/F},0)}{L(\chi_{E/F},0)} -\frac{1}{4}\log\left(\frac{|d_E|}{d_F}\right) - \frac{d}{2}\log(2\pi),
\end{align} 
where $L(\chi_{E/F},s)$ is the $L$--function of the Hecke character $\chi_{E/F}$ associated to the quadratic extension $E/F$.
\end{theorem}

\begin{remark}\label{keyremark} If $E$ is a $G$-Weyl CM field of degree $2d \geq 4$, then $E/\Q$ is \textit{non-abelian} (see Proposition \ref{non-abelian}).
\end{remark}

The following results give infinitely many density-one families of non-abelian CM fields which satisfy
the Colmez conjecture.

First, we have the following result, which is an immediate consequence of Theorems \ref{ColmezWeyl} and \ref{main_thm}.

\begin{theorem}\label{Colmez1} Assume that Hypothesis \ref{WeakMalle} is true for the pair $(d, G)$. Then the Colmez conjecture is true 
for 100\% of CM fields $E$ of degree $2d$ which have a maximal totally real subfield $F$ with Galois group 
$\mathrm{Gal}(F^c/\Q) \cong G$. 
\end{theorem}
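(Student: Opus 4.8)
The plan is to combine Theorem \ref{main_thm} with Theorem \ref{ColmezWeyl} in a purely formal way; the statement is essentially a packaging of these two results, so the ``proof'' is short and the real content lies upstream. First I would recall the relevant counting functions: by definition $N_{2d}^{\cm}(X, G)$ counts all CM fields $E$ of degree $2d$ and discriminant $|d_E| \leq X$ whose maximal totally real subfield $F$ satisfies $\Gal(F^c/\Q) \cong G$, and $N_{2d}^{\Weyl}(X, G)$ counts the subset of those $E$ which are $G$-Weyl, i.e., with $\Gal(E^c/\Q) = C_2 \wr G$. The phrase ``the Colmez conjecture is true for $100\%$ of CM fields $E$ of degree $2d$ with maximal totally real subfield of Galois group $G$, when ordered by discriminant'' is by definition the assertion that the number of such $E$ with $|d_E| \leq X$ for which the Colmez conjecture \emph{fails}, divided by $N_{2d}^{\cm}(X, G)$, tends to $0$ as $X \to \infty$.

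Next, under the hypothesis that Hypothesis \ref{WeakMalle} holds for $(d, G)$, Theorem \ref{main_thm} gives
\begin{equation*}
\frac{N_{2d}^{\Weyl}(X, G)}{N_{2d}^{\cm}(X, G)} = 1 + O_{d,G,\epsilon}\bigl(X^{-C_1(\delta_d, M(G)) + \epsilon}\bigr),
\end{equation*}
and since $C_1(\delta_d, M(G)) > 0$ (this is immediate from \eqref{eq:def_C1} together with the constraint $\delta_d + M(G) < 2$ built into Hypothesis \ref{WeakMalle}), the error term is $o(1)$. Equivalently, writing the complementary count, the number of CM fields $E$ of degree $2d$ with discriminant at most $X$, with maximal totally real subfield of Galois group $G$, which are \emph{not} $G$-Weyl, is $o\bigl(N_{2d}^{\cm}(X, G)\bigr)$. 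Then I would invoke Theorem \ref{ColmezWeyl}: every $G$-Weyl CM field satisfies the Colmez conjecture. Hence the set of $E$ (in the family under consideration, with $|d_E| \leq X$) for which the Colmez conjecture fails is contained in the set of $E$ in that family which are not $G$-Weyl, and the latter has size $o\bigl(N_{2d}^{\cm}(X, G)\bigr)$. Dividing through by $N_{2d}^{\cm}(X, G)$ and letting $X \to \infty$ gives that the proportion of counterexamples tends to $0$, which is precisely the claim; indeed one even gets the quantitative statement that this proportion is $O_{d,G,\epsilon}(X^{-C_1(\delta_d, M(G)) + \epsilon})$.

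Strictly speaking there is essentially no obstacle here: the proof is a one-line deduction. The only point requiring any care is the bookkeeping around the phrase ``$100\%$ of CM fields of degree $2d$ which have a maximal totally real subfield $F$ with $\Gal(F^c/\Q) \cong G$'' — one must make sure the denominator used is $N_{2d}^{\cm}(X, G)$ (fields with the prescribed real subfield Galois group) and not $N_{2d}^{\cm}(X)$ (all CM fields of degree $2d$), since the density-one statement is relative to the former. The genuine difficulty in the whole circle of ideas is concentrated entirely in Theorem \ref{main_thm} (the adaptation of Kl\"uners' counting arguments to signature conditions, with the $2$-torsion and subconvexity inputs) and in Theorem \ref{ColmezWeyl} (deducing the full Colmez conjecture for $G$-Weyl fields from the averaged Colmez conjecture via the Galois theory of $C_2 \wr G$), both of which are established earlier and may be cited. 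Thus the proof of Theorem \ref{Colmez1} simply records that these two inputs combine to yield the desired density-one statement.
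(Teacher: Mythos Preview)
Your proposal is correct and matches the paper's approach exactly: the paper states that Theorem~\ref{Colmez1} is ``an immediate consequence of Theorems~\ref{main_thm} and~\ref{ColmezWeyl}'' and gives no further argument. Your write-up simply unpacks this one-line deduction, with the same inputs and the same logic.
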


Next, by combining Theorem \ref{Colmez1} with Corollary \ref{Gpairs}, we get the following unconditional result.

\begin{corollary}\label{Colmez1Cor} If $(d,G)$ is any of the pairs in Corollary \ref{Gpairs}, then 
the Colmez conjecture is true for 100\% of CM fields $E$ of degree $2d$ which have a 
maximal totally real subfield $F$ with Galois group $\mathrm{Gal}(F^c/\Q) \cong G$. 
\end{corollary}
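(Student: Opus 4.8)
The statement is a formal consequence of results already established, and the plan is simply to chain them together. Fix a pair $(d,G)$ appearing in the list of Corollary \ref{Gpairs}. The CM fields $E$ of degree $2d$ whose maximal totally real subfield $F$ satisfies $\mathrm{Gal}(F^c/\Q)\cong G$ are, when ordered by discriminant, precisely the fields enumerated by the counting function $N_{2d}^{\textrm{cm}}(X,G)$; within this family, the $G$-Weyl ones are enumerated by $N_{2d}^{\textrm{Weyl}}(X,G)$. (For each pair on the list one has $N_d(X,G)\to\infty$, and every totally real field admits a CM quadratic extension, so $N_{2d}^{\textrm{cm}}(X,G)\to\infty$ and the relevant density is well-defined.) Thus ``the Colmez conjecture holds for $100\%$ of such $E$'' means precisely that the subset of this family on which the Colmez conjecture fails has density zero.

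First I would invoke Corollary \ref{Gpairs}, which asserts that the asymptotic formula \eqref{density1} holds unconditionally for $(d,G)$; hence
\[
\frac{N_{2d}^{\textrm{Weyl}}(X,G)}{N_{2d}^{\textrm{cm}}(X,G)} = 1 + O_{d,G,\epsilon}\!\left(X^{-C_1(\delta_d,M(G))+\epsilon}\right),
\]
with $C_1(\delta_d,M(G))>0$ as in \eqref{eq:def_C1}, where admissible values of $\delta_d$ and $M(G)$ are read off from the known Malle-type bounds and $2$-torsion estimates (for $d\le 5$ these are completely explicit). In particular, the CM fields in the family that are \emph{not} $G$-Weyl form a density-zero, indeed power-saving, subset. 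Next I would apply Theorem \ref{ColmezWeyl}: every $G$-Weyl CM field satisfies the Colmez conjecture. Combining the two, the set of $E$ in the family for which the Colmez conjecture might fail is contained in the density-zero set of non-$G$-Weyl fields; therefore the Colmez conjecture holds on a density-one subset, i.e.\ for $100\%$ of such $E$. This is precisely the argument behind Theorem \ref{Colmez1}, with the role of Hypothesis \ref{WeakMalle} now played by the unconditional statement of Corollary \ref{Gpairs}.

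The only point requiring care is a piece of bookkeeping: one must confirm that the ``$100\%$'' in Theorem \ref{Colmez1} is literally the density statement packaged in \eqref{density1}, and that Hypothesis \ref{WeakMalle} enters Theorems \ref{main_thm} and \ref{Colmez1} only through that conclusion, so that substituting the unconditional input of Corollary \ref{Gpairs} for the pairs on its list is legitimate. I do not expect any genuine obstacle here: all of the substance lies upstream, in Theorem \ref{ColmezWeyl} (which rests on the averaged Colmez conjecture of Yuan-Zhang and Andreatta-Goren-Howard-Madapusi Pera together with the Galois theory of the wreath product $C_2\wr G$) and in Theorem \ref{main_thm} (the Kl\"uners-type density computation, with its $2$-torsion and subconvexity inputs). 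Granting those, the corollary is immediate.
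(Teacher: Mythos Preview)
Your proposal is correct and follows exactly the paper's approach: the paper derives Corollary~\ref{Colmez1Cor} simply by combining Theorem~\ref{Colmez1} (itself an immediate consequence of Theorems~\ref{main_thm} and~\ref{ColmezWeyl}) with Corollary~\ref{Gpairs}, which is precisely the chain you spell out. One small inaccuracy in your final commentary: Theorem~\ref{main_thm} does not use any subconvexity input---that enters only in Theorem~\ref{WeylDensity}---but this does not affect your argument.
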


Similarly, the following is an immediate consequence of Theorems \ref{ColmezWeyl} and \ref{WeylDensity}.

\begin{theorem}\label{Colmez2} 
Assume that Hypothesis \ref{WeakMalle} is true for every pair $(d, G)$ where $G$ ranges over all 
transitive subgroups $G \leq  S_d$. Then the Colmez conjecture is true for 100\% of CM fields of degree $2d$. 
\end{theorem}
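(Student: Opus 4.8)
The plan is to obtain this statement as a formal consequence of Theorems~\ref{WeylDensity} and~\ref{ColmezWeyl}, so the argument will be short. First I would record the input from Theorem~\ref{ColmezWeyl}: for \emph{every} CM field $E$ of degree $2d$ which is $G$-Weyl for some transitive subgroup $G \leq S_d$, the Colmez conjecture holds for $E$, with no further hypothesis. Hence it suffices to show that such fields account for $100\%$ of all CM fields of degree $2d$ when ordered by discriminant.

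Next I would invoke Theorem~\ref{WeylDensity}, taking $\delta^{\prime} = 1/2$ so that the subconvexity hypothesis~\eqref{subconvex} is satisfied automatically by the convexity bound, and no genuine subconvexity input is needed. Under Hypothesis~\ref{WeakMalle} for every pair $(d,G)$ with $G \leq S_d$ transitive, equation~\eqref{full2} gives
\[
\frac{N_{2d}^{\mathrm{Weyl}}(X)}{N_{2d}^{\mathrm{cm}}(X)} = 1 + O_{d,\epsilon}\bigl(X^{-C_3(\delta_d,\delta^{\prime}) + \epsilon}\bigr)
\]
with $C_3(\delta_d,\delta^{\prime}) > 0$. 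Letting $X \to \infty$, the left-hand side tends to $1$; that is, the set
\[
\bigcup_{G \leq S_d} \{\,G\text{-Weyl CM fields of degree } 2d\,\}
\]
has density one among all CM fields of degree $2d$. Here I would note that the sum defining $N_{2d}^{\mathrm{Weyl}}(X)$ runs over pairwise disjoint sets: a CM field $E$ determines its maximal totally real subfield $F$, hence $\Gal(F^c/\Q)$, so it can be $G$-Weyl for at most one $G$ and no inclusion--exclusion is required.

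Combining the two ingredients then finishes the proof: every CM field in this density-one family satisfies the Colmez conjecture by Theorem~\ref{ColmezWeyl}, so the Colmez conjecture holds for $100\%$ of CM fields of degree $2d$. I do not anticipate any obstacle in this deduction — all of the analytic and Galois-theoretic content resides in Theorems~\ref{WeylDensity} and~\ref{ColmezWeyl}, and the present statement is just the corollary obtained by intersecting a ``Colmez holds'' set with a ``density one'' set.
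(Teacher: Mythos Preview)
Your proposal is correct and matches the paper's own approach exactly: the paper states this theorem as ``an immediate consequence of Theorems~\ref{WeylDensity} and~\ref{ColmezWeyl}'' with no further argument. Your added remarks (taking $\delta'=1/2$ so that only the convexity bound is needed, and the disjointness of the $G$-Weyl families) are helpful clarifications but do not deviate from the paper's reasoning.
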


Finally, since Hypothesis \ref{WeakMalle} holds for every pair $(d,G)$ with $d \leq 5$ (as observed previously), we get the following 
unconditional result.

\begin{corollary}\label{ColmezAll} 
If $d \leq 5$, then the Colmez conjecture is true for 100\% of CM fields of degree $2d$. 
\end{corollary}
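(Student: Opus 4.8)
The plan is to obtain this statement as a purely formal consequence of the results already assembled in this section, so the work is one of verification rather than of new argument. First I would fix $d \le 5$ and observe that every transitive subgroup $G \le S_d$ is covered by the clause ``any $(d,G)$ with $d \le 5$'' of Corollary~\ref{Gpairs}; hence Hypothesis~\ref{WeakMalle} holds unconditionally for every pair $(d,G)$ with $G$ transitive in $S_d$. I would then recall that the subconvexity input \eqref{subconvex} demanded by Theorem~\ref{WeylDensity} may be taken to be the convexity bound $\delta' = 1/2$, which holds with no hypothesis whatsoever for all totally real $F$ and all primitive ray class characters of conductor dividing $\mfc^2 \mfc_\infty$. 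Thus for $d \le 5$ every standing hypothesis of Theorem~\ref{WeylDensity} is met unconditionally; this is exactly the content of Corollary~\ref{densityall}, so \eqref{full1} and \eqref{full2} hold unconditionally, and in particular $N_{2d}^{\mathrm{Weyl}}(X)/N_{2d}^{\mathrm{cm}}(X) \to 1$, i.e. $100\%$ of CM fields of degree $2d$ are $G$-Weyl for some transitive $G \le S_d$.

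Next I would invoke Theorem~\ref{ColmezWeyl}, which asserts that the Colmez conjecture is true for every $G$-Weyl CM field. Combining this with the density statement of the previous step, the set of CM fields of degree $2d$ for which the Colmez conjecture is known contains the set $\bigcup_{G \le S_d}\{G\text{-Weyl CM fields of degree }2d\}$, which by Corollary~\ref{densityall} has density one among all CM fields of degree $2d$. This yields the assertion for each $d \le 5$, and is precisely what one gets by specializing the (unlabeled) theorem stated immediately above this corollary to the unconditionally-known cases of Hypothesis~\ref{WeakMalle}.

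The ``hard part'' is not located in this corollary at all: it sits in Theorem~\ref{WeylDensity} (the count of $G$-Weyl CM fields, obtained by adapting Kl\"uners' analysis of $C_2 \wr G$ extensions, imposing the signature conditions, and feeding in the $2$-torsion bounds \eqref{torsion} and the convexity bound) and in Theorem~\ref{ColmezWeyl} (the reduction of Colmez for $G$-Weyl fields to the averaged Colmez conjecture of Yuan--Zhang and Andreatta--Goren--Howard--Madapusi~Pera via the Galois theory of the wreath product $C_2 \wr G$, following \cite{BSM16}). The only thing requiring care here is the bookkeeping: confirming that for each $d \in \{1,2,3,4,5\}$ the clause of Corollary~\ref{Gpairs} invoked above genuinely exhausts the transitive subgroups of $S_d$ (it does, since that clause is stated for all $(d,G)$ with $d \le 5$ with no restriction on $G$), and confirming that no subconvexity improvement or further signature refinement is secretly required --- it is not, because $\delta' = 1/2$ always suffices in \eqref{subconvex}.
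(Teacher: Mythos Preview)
Your proposal is correct and follows exactly the same route as the paper: the corollary is derived as an immediate specialization of the preceding (unlabeled) theorem combining Theorems~\ref{WeylDensity} and~\ref{ColmezWeyl}, using that Hypothesis~\ref{WeakMalle} is known for all $(d,G)$ with $d \le 5$ by Corollary~\ref{Gpairs}. Your extra remark that the convexity bound $\delta' = 1/2$ suffices in \eqref{subconvex} is a helpful clarification of something the paper leaves implicit.
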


\begin{remark} The Colmez conjecture is now known to be true 
for quartic CM fields, sextic CM fields, and many degree 10 CM fields (see e.g. \cite{YY16}).
\end{remark}

We conclude by briefly summarizing some results on the Colmez conjecture. 

Colmez \cite{Col93} proved his conjecture for abelian CM fields (up to an error term which was eliminated by Obus \cite{Obu13}). 

Tonghai Yang \cite{Yan10a, Yan10b, Yan13} proved the Colmez conjecture for a large class of quartic CM fields, including the first non-abelian cases. 

The averaged Colmez conjecture \cite{AGHM15, YZ15} made it possible to deduce many new cases of the Colmez conjecture. 
For example, the first two authors \cite{BSM16} proved that if 
$F$ is any totally real number field of degree $d \geq 3$, 
then there are infinitely many effectively constructible, positive density sets of CM extensions 
$E/F$ such that $E/\Q$ is non-abelian and the Colmez conjecture is true for $E$. 
Yang and Yin \cite{YY16} proved that if $E$ is a CM field of the form $E=FK$ 
where $K=\Q(\sqrt{-D})$ is an imaginary quadratic field, and  
$\textrm{Gal}(F^c/\Q)$ is isomorphic to either $S_d$ or $A_d$, then the Colmez conjecture is true for $E$. This follows from a more refined result
of these authors \cite{YY16} which shows that the Colmez conjecture is true for CM types $\Phi$ of $E=FK$ of signature $(d-1,1)$. 
Parenti \cite{P18} proved that if $E$ is a CM field of the form $E=FK$, 
and $\textrm{Gal}(F^c/\Q) \cong PSL_2(\mathbb{F}_q)$, then the Colmez conjecture is true for $E$.

\section{Proof of Theorems \ref{main_thm} and \ref{WeylDensity}}

In this section we prove Theorems \ref{main_thm} and \ref{WeylDensity}, following closely Kl\"uners's Dirichlet series approach  \cite{Klu12}.  

Let $N_{2d}^{\textrm{cm}}(X, G)$ count all CM fields $E$ of degree $2d$ and discriminant $|d_E | \leq X$ 
which have a maximal totally real subfield $F$ with $\Gal(F^c/\Q) \cong G$. 

Let $N_{2d}^{\neg \Weyl}(X, G)$ count the subset of all CM fields counted by $N_{2d}^{\textrm{cm}}(X, G)$ 
which are \textit{not} of $G$-Weyl type. 

We first establish asymptotics for $N_{2d}^{\cm}(X, G)$, and then give upper bounds for $N_{2d}^{\neg  \Weyl}(X, G)$.

\subsection{Asymptotics for  $N_{2d}^{\cm}(X, G)$} We begin by establishing an asymptotic formula with a power-saving 
error term for $N_{2d}^{\cm}(X, G)$. The key is a theorem of Cohen, Diaz y Diaz, and Olivier \cite{CDO02} which expresses the Dirichlet series enumerating 
all quadratic extensions of a number field as a linear combination of Hecke $L$--functions. 
We will use a version of their result which incorporates signature conditions.

Fix a totally real field $F$ of degree $d$ and define the Dirichlet series
\begin{equation*}
D_{F, C_2}^-(s) := \sum_{[E : F] = 2} \frac{1}{\calN_{F/\Q}(\frak{D}_{E/F})^s}=\sum_{n=1}^{\infty}\frac{a^{-}(n)}{n^{s}},  \quad \textrm{Re}(s) > 1 
\end{equation*}
where the sum is over all totally imaginary quadratic extensions $E/F$, $\frak{D}_{E/F}$ is the relative discriminant, and 
\begin{align*}
a^{-}(n):=\# \{\textrm{$E/F$ totally imaginary quadratic}, ~ \mathcal{N}_{F/\Q}(\frak{D}_{E/F})=n\}.
\end{align*}

The following is a special case of 
\cite[Theorem 3.11]{CDO02}, applied to the totally real field $F$ (which has signature $(d,0)$) and with 
$\mathfrak{m}_{\infty}$ equal to the set of real places of $F$ 
(which are all ramified in the imaginary quadratic extension $E/F$). 

\begin{theorem}[\cite{CDO02}]
For $\mathrm{Re}(s) > 1$ we have
\begin{equation*}
D_{F, C_2}^-(s) = \frac{1}{\zeta_F(2s)}
\sum_{\frak{c}_{\infty} \subset \frak{m}_{\infty}}\sum_{\mfc \mid 2} \frac{ (-1)^{|\mfc_\infty|}}{2^{|\mfc_\infty|}}
\calN_{F/\Q}(2/\mfc)^{1 - 2s} \sum_{\chi \in Q(\Cl_{\mfc^2 \mfc_\infty}(F))} L_F(\chi, s),
\end{equation*}
where $\mfc$ runs over all integral ideals of $F$ dividing $2$, $\mfc_\infty$ runs over
all subsets of the set of real places $\frak{m}_{\infty}$ of $F$, $\chi$ runs over all quadratic characters 
$Q(\Cl_{\mfc^2 \mfc_\infty}(F))$ of the ray class group $\Cl_{\mfc^2 \mfc_\infty}(F)$ modulo $\mfc^2 \mfc_\infty$, and $L_F(\chi,s)$ 
is the $L$--function of $\chi$.
\end{theorem}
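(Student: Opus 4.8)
The plan is to obtain this identity as a direct specialization of \cite[Theorem 3.11]{CDO02}, which gives the Dirichlet series enumerating the quadratic extensions $E/F$ of an arbitrary number field $F$ with prescribed behavior at the real places of $F$. The only input needed to pass to our setting is the elementary observation that, since $F$ is totally real, a quadratic extension $E/F$ is totally imaginary if and only if \emph{every} real place of $F$ ramifies in $E$. Thus $D_{F, C_2}^-(s)$ is exactly the \cite{CDO02} generating series for quadratic extensions in which the full set $\mathfrak{m}_\infty$ of real places is required to ramify; in particular the trivial character, which would correspond to the non-extension $E = F$, cannot occur, so no correction term is needed. The proof then consists of transcribing \cite[Theorem 3.11]{CDO02} under this choice of $\mathfrak{m}_\infty$, after spelling out the dictionary between their notation and ours.

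It is worth recalling the shape of the \cite{CDO02} identity so that the reader can see where each factor comes from. By class field theory the quadratic extensions $E/F$ are in bijection with the primitive quadratic characters $\chi$ of ray class groups of $F$, and the conductor--discriminant formula identifies $\mathcal{N}_{F/\Q}(\mathfrak{D}_{E/F})$ with the norm of the finite part of the conductor of $\chi$. Since $2$ is the only rational prime that can ramify wildly in a quadratic extension, the possible conductor exponents at the primes above $2$ are recorded by an integral ideal $\mathfrak{c}\mid 2$, and the behavior at the real places by a subset $\mathfrak{c}_\infty\subset\mathfrak{m}_\infty$. After expanding each $L_F(\chi,s)$ as a sum over ideals and applying orthogonality of the quadratic characters, the inner sum $\sum_{\chi\in Q(\Cl_{\mathfrak{c}^2\mathfrak{c}_\infty}(F))}L_F(\chi,s)$ detects ideals lying in the subgroup of squares of $\Cl_{\mathfrak{c}^2\mathfrak{c}_\infty}(F)$; a Möbius-type inversion then converts the condition ``conductor divides $\mathfrak{c}^2\mathfrak{c}_\infty$'' into ``conductor equals,'' producing the factor $1/\zeta_F(2s)$ and the power $\mathcal{N}_{F/\Q}(2/\mathfrak{c})^{1-2s}$ at the finite places (which together weight by the actual rather than the merely divisible conductor above $2$), and the alternating weights $\frac{(-1)^{|\mathfrak{c}_\infty|}}{2^{|\mathfrak{c}_\infty|}}$ at the infinite places, which implement by inclusion--exclusion the requirement that every real place genuinely ramify. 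Absolute convergence of both sides for $\mathrm{Re}(s)>1$ is standard, e.g.\ by comparison of $D_{F, C_2}^-(s)$ with $\zeta_F(s)$, or directly from the fact that the right-hand side is a finite linear combination of Hecke $L$-functions divided by $\zeta_F(2s)$.

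The step that requires the most care --- and the only genuinely nontrivial point, the rest being bookkeeping --- is matching the normalization of the conductor at the primes above $2$: quadratic characters of $F$ can be wildly ramified there, so one must check that letting $\mathfrak{c}$ range over all divisors of $2$, with the stated weight $\mathcal{N}_{F/\Q}(2/\mathfrak{c})^{1-2s}$ and the overall factor $1/\zeta_F(2s)$, accounts for every possible finite conductor supported above $2$ with the correct multiplicity and with no double counting across different $\mathfrak{c}$. This is precisely what is verified in \cite{CDO02}; consequently the proof in the end amounts to quoting their Theorem 3.11 with $\mathfrak{m}_\infty$ taken to be the full set of real places of $F$, and checking that the signature hypothesis there specializes to the totally imaginary condition defining $D_{F, C_2}^-(s)$.
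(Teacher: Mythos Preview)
Your proposal is correct and matches the paper's approach exactly: the paper does not prove this statement at all, but simply records it as the special case of \cite[Theorem 3.11]{CDO02} obtained by taking $F$ totally real of signature $(d,0)$ and $\mathfrak{m}_\infty$ equal to the full set of real places (all of which ramify in a totally imaginary quadratic extension). Your write-up goes further than the paper by sketching the structure of the \cite{CDO02} argument, but the underlying route is identical.
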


The following result establishes some important analytic properties of the Dirichlet series $D_{F, C_2}^-(s)$ (this is 
analogous to \cite[Theorem 5]{Klu12}).

\begin{theorem}\label{thm:kl5} Assume that $0 \leq \delta_d, \delta^{\prime} \leq 1/2$ satisfy 
(\ref{torsion}) and (\ref{subconvex}), respectively. Then the Dirichlet series $D_{F, C_2}^-(s)$ 
has a meromorphic continuation to $\mathrm{Re}(s) > 1/2$ with only a single (simple) pole at $s = 1$ with residue
\[
R_d(F) := \frac{\Res_{s = 1} \zeta_F(s)}{2^{d}\zeta_F(2)} > 0.
\]
Moreover, the function 
\begin{align*}
g_F(s) := D_{F, C_2}^-(s) - \frac{R_d(F)}{s - 1}
\end{align*}
is analytic for $\sigma:=\mathrm{Re}(s) > 1/2$ and satisfies the bound
\begin{equation}\label{eqn:convexity}
g_F(\sigma + it) \ll_{\epsilon, d} d_F^{\delta^{\prime}(1 - \sigma) + \delta_d + \epsilon} 
\frac{(1 + |t|)^{d\delta^{\prime}(1 - \sigma) + \epsilon}}{(\sigma - 1/2)^{d}}, \quad 1/2 < \sigma \leq 1 + \epsilon.
\end{equation}
\end{theorem}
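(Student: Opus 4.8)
The plan is to start from the exact formula of Cohen--Diaz y Diaz--Olivier,
\[
D_{F, C_2}^-(s) = \frac{1}{\zeta_F(2s)}
\sum_{\frak{c}_{\infty} \subset \frak{m}_{\infty}}\sum_{\mfc \mid 2} \frac{ (-1)^{|\mfc_\infty|}}{2^{|\mfc_\infty|}}
\calN_{F/\Q}(2/\mfc)^{1 - 2s} \sum_{\chi \in Q(\Cl_{\mfc^2 \mfc_\infty}(F))} L_F(\chi, s),
\]
and analyze each ingredient on the strip $1/2 < \sigma \le 1 + \epsilon$. The factor $1/\zeta_F(2s)$ is holomorphic and bounded there (and bounded below away from $\sigma = 1/2$) since $\zeta_F(2s)$ has no zeros in $\mathrm{Re}(2s) > 1$; the finitely many factors $\calN_{F/\Q}(2/\mfc)^{1-2s}$ are harmless, of size $\ll d_F^{\epsilon}$ since $\calN(2/\mfc) \le 2^d$. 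Each $L_F(\chi, s)$ with $\chi$ nontrivial is entire and, by the subconvexity hypothesis \eqref{subconvex} with exponent $\delta'$, satisfies
\[
L_F(\chi, \sigma + it) \ll_{\epsilon, d} \mathfrak{q}(F, \chi, \sigma+it)^{\delta'(1-\sigma) + \epsilon}
\ll_{\epsilon, d} \big(d_F (|t|+4)^d\big)^{\delta'(1-\sigma)+\epsilon},
\]
using $\calN_{F/\Q}(\mfc^2) \ll 1$. The single pole comes from the trivial character: when $\chi = \chi_0$ is trivial on $\Cl_{\mfc^2 \mfc_\infty}(F)$ we have $L_F(\chi_0, s) = \zeta_F(s) \cdot \prod_{\frak{p} \mid \mfc^2 \mfc_\infty}(1 - \calN\frak{p}^{-s})$, which is meromorphic on $\mathrm{Re}(s) > 0$ with a simple pole only at $s=1$.

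Next I would isolate the residue. The trivial character occurs exactly once in each inner sum (for each choice of $\mfc$ and $\mfc_\infty$), so summing the residues of $\zeta_F(s)/\zeta_F(2)$ times the Euler factors at $s=1$, against the signs $(-1)^{|\mfc_\infty|}/2^{|\mfc_\infty|}$ and the weights $\calN(2/\mfc)^{-1}$, should collapse — exactly as in Klüners \cite{Klu12} — to the clean value $R_d(F) = \Res_{s=1}\zeta_F(s)/(2^d \zeta_F(2))$. Concretely, the sum over $\mfc_\infty \subset \frak{m}_\infty$ of $(-1)^{|\mfc_\infty|} 2^{-|\mfc_\infty|}$ times the local factors at the real places, combined with the sum over $\mfc \mid 2$, telescopes to $2^{-d}$ after cancelling the ramified Euler factors at $s=1$; positivity of $R_d(F)$ is immediate since $\zeta_F$ has positive residue and positive value at $2$. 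This is the step where I must be careful to reproduce Klüners's bookkeeping faithfully with the signature constraint built in (all real places ramified).

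The bound \eqref{eqn:convexity} on $g_F(s) = D_{F,C_2}^-(s) - R_d(F)/(s-1)$ then follows by combining the above: the contribution of the nontrivial characters is bounded by (number of characters) $\times$ (subconvex bound), and the number of quadratic characters of $\Cl_{\mfc^2 \mfc_\infty}(F)$ is $\ll |\Cl_{\mfc^2 \mfc_\infty}(F)[2]| \ll |\Cl(F)[2]| \cdot 2^{O(d)} \ll_{\epsilon,d} d_F^{\delta_d + \epsilon}$, invoking the 2-torsion hypothesis \eqref{torsion} (with the ray-class-to-class-group comparison costing only an $O_d(1)$ factor from the finitely many ramified primes above $2$ and the real places). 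The remaining trivial-character contribution, after subtracting the principal part, is $\zeta_F(s)$ times a bounded Euler product minus its pole, which on $1/2 < \sigma \le 1+\epsilon$ is $\ll_\epsilon d_F^{\epsilon}(1+|t|)^{\epsilon}$ by the standard convexity bound for $\zeta_F$ (this is the $\delta' = 1/2$, or rather the $O(d_F^\epsilon)$ edge-of-strip, estimate and is dominated by the stated bound). The $1/\zeta_F(2s)$ factor contributes the $(\sigma - 1/2)^{-d}$ denominator: near $\sigma = 1/2$ one uses $1/|\zeta_F(2s)| \ll (\sigma - 1/2)^{-d}$, coming from the Laurent expansion of the $d$-fold product over the factor $\zeta_F$, i.e. from $\zeta(2\sigma - 1 + 2it) \sim 1/(2\sigma - 1)$ near the edge applied $\ll d$ times, plus the trivial bound for the entire part. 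Collecting all factors gives exactly the claimed $d_F^{\delta'(1-\sigma)+\delta_d+\epsilon}(1+|t|)^{d\delta'(1-\sigma)+\epsilon}(\sigma-1/2)^{-d}$.

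The main obstacle I anticipate is not any single analytic estimate — each is routine given the hypotheses — but rather getting the dependence on $d_F$ and on $t$ uniform and sharp simultaneously. In particular, one must (i) quote \eqref{subconvex} in a form where the implied constant depends only on $\epsilon$ and $d$, not on $F$ or $\chi$, which is why the hypothesis is phrased in terms of the analytic conductor; (ii) ensure the $2$-torsion count for the ray class group genuinely reduces to $\Cl(F)[2]$ with a $d$-dependent but $F$-independent loss, which requires a short argument with the exact sequence relating $\Cl_{\mfc^2 \mfc_\infty}(F)$ to $\Cl(F)$ and noting $\calN(\mfc^2) \le 4^d$; and (iii) track the $(\sigma - 1/2)^{-d}$ blow-up from $1/\zeta_F(2s)$ without picking up spurious factors of $d_F$ — here one wants a lower bound $|\zeta_F(2s)| \gg (\sigma - \tfrac12)^d$ for $\tfrac12 < \sigma \le 1+\epsilon$ with an absolute implied constant, which follows from writing $\zeta_F(2s) = \prod_{\frak p} (1-\calN\frak p^{-2s})^{-1}$ and bounding the tail uniformly while handling the finitely many small-norm primes by the Euler-product lower bound valid for $\mathrm{Re}(2s) > 1$, continued to the edge. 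Once these uniformity points are pinned down, assembling \eqref{eqn:convexity} is mechanical.
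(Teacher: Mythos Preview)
Your approach is essentially identical to the paper's: decompose $D_{F,C_2}^-(s)$ into the trivial-character piece $A(s)$ and the nontrivial piece $B(s)$, compute the residue via the CDO bookkeeping (the $\mfc$-sum and the $\mfc_\infty$-sum collapse to $1$ and $2^{-d}$ respectively, exactly as you say), bound $B(s)$ by (number of quadratic characters)$\times$(subconvex bound) using \eqref{torsion} and \eqref{subconvex}, and absorb $1/\zeta_F(2s)$ via the uniform lower bound $|\zeta_F(2s)| \gg_d (\sigma-1/2)^{d}$.

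One correction is needed. Your claim that the trivial-character contribution minus its pole is $\ll_\epsilon d_F^{\epsilon}(1+|t|)^{\epsilon}$ ``by the standard convexity bound'' is too strong: convexity only gives $d_F^{(1-\sigma)/2+\epsilon}$ in the $d_F$-aspect, and nothing better than \eqref{subconvex} is available. The paper instead applies the hypothesis \eqref{subconvex} to $\zeta_F(s)$ itself (the $a(\chi)=1$ case), obtaining $(s-1)\zeta_F(s) \ll_{\epsilon,d} (d_F(|s|+4)^d)^{\delta'(1-\sigma)+\epsilon}(|s|+1)$ and hence
\[
A(s) - \frac{R_d(F)}{s-1} \ \ll_{\epsilon,d}\ (\sigma-\tfrac12)^{-d}\, d_F^{\delta'(1-\sigma)+\epsilon}(1+|t|)^{d\delta'(1-\sigma)+\epsilon}.
\]
Since $\delta_d \ge 0$, this is dominated by the $B(s)$ bound and by \eqref{eqn:convexity}, so your conclusion that this term is absorbed is correct once the intermediate estimate is fixed.
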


\begin{proof} 
Let $\chi_{0, \frak{c}} \in Q(\Cl_{\mfc^2 \mfc_\infty}(F))$ be the trivial character and write 
\begin{align}\label{decompAB}
D_{F, C_2}^-(s)=A(s) + B(s),
\end{align}
where 
\begin{align*}
A(s)&:= \frac{1}{\zeta_F(2s)}\sum_{\frak{c}_{\infty} \subset \frak{m}_{\infty}}\frac{(-1)^{|\mfc_\infty|}}{2^{|\mfc_\infty|}}
\sum_{\mfc \mid 2} \calN_{F/\Q}(2/\mfc)^{1 - 2s}L(\chi_{0,\frak{c}},s) 
\end{align*}
and
\begin{align*}
B(s)&:=\frac{1}{\zeta_F(2s)}\sum_{\frak{c}_{\infty} \subset \frak{m}_{\infty}}\sum_{\mfc \mid 2} \frac{ (-1)^{|\mfc_\infty|}}{2^{|\mfc_\infty|}}
\calN_{F/\Q}(2/\mfc)^{1 - 2s} \sum_{\substack{\chi \in Q(\Cl_{\mfc^2 \mfc_\infty}(F)) \\ \chi \neq \chi_{0,\frak{c}}}}L_F(\chi,s).
\end{align*}
The $L$--function  
\begin{align*}
L_F(\chi_{0, \frak{c}},s)=\zeta_F(s)\prod_{\frak{p}|\frak{c}}\left(1-\mathcal{N}_{F/\Q}(\frak{p})^{-s}\right)
\end{align*} 
extends to a meromorphic function on $\C$ with only a single (simple) pole at $s=1$, and 
if $\chi \neq \chi_{0,\frak{c}}$ the $L$--function $L_F(\chi,s)$ extends to an analytic function on $\C$. Hence, 
by (\ref{decompAB}) the Dirichlet series $D_{F, C_2}^-(s)$ extends to a meromorphic function on $\sigma > 1/2$ 
with only a single (simple) pole at $s=1$ with residue 
\begin{align*}
R_d(F)=\textrm{Res}_{s=1}A(s)=S \cdot \textrm{Res}_{s=1}\zeta_F(s),
\end{align*}
where 
\begin{align*}
S:=\frac{1}{\zeta_F(2)}\sum_{\frak{c}_{\infty} \subset \frak{m}_{\infty}}\frac{(-1)^{|\mfc_\infty|}}{2^{|\mfc_\infty|}}
\sum_{\mfc \mid 2} \calN_{F/\Q}(2/\mfc)^{-1}\prod_{\frak{p}|\frak{c}}\left(1-\mathcal{N}_{F/\Q}(\frak{p})^{-1}\right).
\end{align*}
As in \cite[Sections 3.3 and 3.4]{CDO02}, we may compute that
\begin{align*}
\sum_{\mfc \mid 2} \calN_{F/\Q}(2/\mfc)^{-1}\prod_{\frak{p}|\frak{c}}\left(1-\mathcal{N}_{F/\Q}(\frak{p})^{-1}\right)=1
\end{align*}
and 
\begin{align*}
\sum_{\frak{c}_{\infty} \subset \frak{m}_{\infty}}\frac{(-1)^{|\mfc_\infty|}}{2^{|\mfc_\infty|}}=\frac{1}{2^{|\frak{m}_{\infty}|}}=\frac{1}{2^{d}}.
\end{align*}
Therefore, we have $S=2^{-d}$ and  
\begin{align*}
R_d(F)=\frac{\textrm{Res}_{s=1}\zeta_F(s)}{2^d\zeta_F(2)}.
\end{align*}

By the preceding facts, the function $g_F(s):=D_{F, C_2}^-(s)-R_d(F)/(s-1)$ is analytic for $\sigma > 1/2$. 
Hence, it remains to establish the bound (\ref{eqn:convexity}).

By (\ref{subconvex}), we have the bound 
\begin{align*}
(s-1)\zeta_F(s) & \ll_{\epsilon, d} \mathfrak{q}(F,\chi_0,s)^{\delta^{\prime}(1-\sigma)+\epsilon}(|s|+1) \\ 
& \ll_{\epsilon, d} (d_F(|s| + 4)^d)^{\delta^{\prime}(1-\sigma)+\epsilon}(|s|+1), \quad 1/2 < \sigma \leq 1 + \epsilon,
\end{align*}
and we also have the bound 
\begin{align*}
\frac{1}{\zeta_F(2s)} \ll \frac{1}{(\sigma - 1/2)^{d}}, \quad 1/2 < \sigma \leq 1 + \epsilon
\end{align*}
(the implied constant is uniform in $F$).
These bounds yield the estimate 
\begin{align*}
(s-1)A(s) - R_d(F) \ll_{\epsilon, d} (\sigma - 1/2)^{-d}  
(d_F(|s| + 4)^d)^{\delta^{\prime}(1-\sigma)+\epsilon}(|s| + 1), \quad 1/2 < \sigma \leq 1+ \epsilon,
\end{align*}
and thus with $f(s):=A(s)-R_d(F)/(s-1)$ the estimate

\begin{align*}
f(\sigma + it) \ll_{\epsilon, d} (\sigma - 1/2)^{-d} d_F^{\delta^{\prime}(1-\sigma)+\epsilon}(1 + |t|)^{d(\delta^{\prime}(1-\sigma)+\epsilon)}, 
\quad 1/2 < \sigma \leq 1 + \epsilon.
\end{align*}

Next observe that if the bound (\ref{subconvex}) holds for some $0 \leq \delta^{\prime} \leq 1/2$, then it also holds 
(with the same $\delta^{\prime}$) if $\chi$ is imprimitive, since the $L$--functions of an imprimitive and primitive character 
differ by a finite Euler product, uniformly bounded above and below by $O(1)$ in the strip $1/2 < \sigma \leq 1 + \epsilon$. 
Therefore, for $\chi \in Q(\Cl_{\mfc^2 \mfc_\infty}(F))$ with $\chi \neq \chi_{\frak{c},0}$, we have 
\begin{align*}
L_F(\chi,s) & \ll_{\epsilon, d} \mathfrak{q}(F,\chi,s)^{\delta^{\prime}(1-\sigma)+2\epsilon}\\ 
& \ll_{\epsilon, d} (d_F(|s| + 4)^d)^{\delta^{\prime}(1-\sigma)+2\epsilon}, \quad 1/2 < \sigma \leq 1 + \epsilon
\end{align*}
with $\mathcal{N}_{F/\Q}(\mfc^2) = O(1)$ for all allowable $\mfc$.
Also, from (\ref{torsion}) we have the bound 
\begin{align*}
|Q(\Cl_{\mfc^2 \mfc_\infty}(F))|=|\Cl_{\mfc^2 \mfc_\infty}(F) [2]| \ll_d |\Cl(F)[2]| \ll_{\epsilon, d} d_F^{\delta_d + \epsilon}.
\end{align*}
Then arguing as above we get 
\begin{align*}
B(\sigma + it) \ll_{\epsilon, d} (\sigma - 1/2)^{-d} d_F^{\delta^{\prime}(1 - \sigma) + \epsilon}
(1 + |t|)^{d(\delta^{\prime}(1 - \sigma) + \epsilon)} d_F^{\delta_d + \epsilon}, \quad 1/2 < \sigma \leq 1 + \epsilon.
\end{align*}

Finally, since $g_F(s)=f(s)+B(s)$, we have 
\begin{align*}
g_F(\sigma + it) \ll_{\epsilon, d} d_F^{\delta^{\prime}(1 - \sigma) + \delta_d + 2\epsilon} 
\frac{(1 + |t|)^{d(\delta^{\prime}(1 - \sigma) + \epsilon)}}{(\sigma - 1/2)^{d}},
\quad 1/2 < \sigma \leq 1 + \epsilon.
\end{align*}
\end{proof}

Now, given a pair $(d,G)$, let
\begin{align*}
\mathcal{F}_G^{+} :=\{F/\Q:~ \textrm{$F$ totally real of degree $d$}, ~\textrm{Gal}(F^c/\Q) \cong G \}.
\end{align*}
Define the Dirichlet series 
\begin{align*}
D_{G}^{\textrm{cm}}(s) :=  \sum_{F \in \mathcal{F}_G^{+}} \sum_{[E:F]=2}\frac{1}{|d_E|^s} = \sum_{n=1}^{\infty}\frac{a(n)}{n^{s}},
\end{align*}
where the inner sum is over all totally imaginary quadratic extensions $E/F$ and 
\begin{align*}
a(n):=\# \{(E,F):~ F \in \mathcal{F}_G^{+}, ~ \textrm{$E/F$ totally imaginary quadratic}, ~ |d_E|=n\}.
\end{align*}
Clearly, the Dirichlet series $D_G^{\textrm{cm}}(s)$ 
enumerates all CM fields counted by $N_{2d}^{\text{cm}}(X, G)$. Using the relation 
\begin{align*}
|d_E|=d_F^2\mathcal{N}_{F/\Q}(\frak{D}_{E/F}),
\end{align*}
we have 
\begin{align}\label{eqn:def_phicm}
D_{G}^{\textrm{cm}}(s) =  \sum_{F \in \mathcal{F}_G^{+}} \frac{1}{d_F^{2s}} \sum_{[E:F]=2} \frac{1}{\mathcal{N}_{F/\Q}(\frak{D}_{E/F})^s}
=\sum_{F \in \mathcal{F}_G^{+}} \frac{D_{F, C_2}^{-}(s)}{d_F^{2s}}.
\end{align}

\begin{theorem}\label{thm:kl6}
Assume that Hypothesis \ref{WeakMalle} is true for $(d, G)$ and $\delta^{\prime}$ satisfies 
(\ref{subconvex}). Then the Dirichlet series $D^{\cm}_{G}(s)$ has a meromorphic continuation to the half-plane 
$\mathrm{Re}(s) > \alpha$ where
\begin{align}\label{alphadef}
\alpha=\alpha(\delta_d, M(G), \delta^{\prime}):=\max\left\{\frac{\delta_d + \delta^{\prime} + M(G)}{\delta^{\prime}+2}, \frac{M(G)}{2}\right\} < 1,
\end{align}
with only a single (simple) pole at $s = 1$ given by the convergent series
\begin{align*}
r_d(G):=\sum_{F \in \mathcal{F}_G^{+}}\frac{R_d(F)}{d_F^{2}}=\sum_{F \in \mathcal{F}^{+}_{G}}\frac{\mathrm{Res}_{s=1}\zeta_F(s)}{2^d d_F^2 \zeta_F(2)} > 0.
\end{align*}
Moreover, for $\sigma:=\mathrm{Re}(s) \in (\alpha, 1+ \epsilon]$ and $|t| > 1$, the Dirichlet series $D^{\cm}_{G}(s)$ satisfies the bound
\begin{align}\label{DGbound}
D^{\cm}_{G}(\sigma + it) \ll_{\epsilon, d, G} \frac{(1 + |t|)^{d\delta^{\prime}(1-\sigma) + \epsilon}}{(\sigma - 1/2)^{d}}.
\end{align}

\end{theorem}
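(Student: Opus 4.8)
The plan is to build $D_G^{\cm}(s)$ out of the already-understood pieces $D_{F,C_2}^-(s)$ by summing the decomposition \eqref{eqn:def_phicm} over $F \in \mathcal{F}_G^+$, splitting off the principal parts and controlling the remainder via Theorem~\ref{thm:kl5} together with Hypothesis~\ref{WeakMalle}. Concretely, write
\begin{align*}
D_G^{\cm}(s) = \sum_{F \in \mathcal{F}_G^+} \frac{1}{d_F^{2s}}\left(\frac{R_d(F)}{s-1} + g_F(s)\right)
= \frac{1}{s-1}\sum_{F \in \mathcal{F}_G^+}\frac{R_d(F)}{d_F^{2s}} + \sum_{F \in \mathcal{F}_G^+}\frac{g_F(s)}{d_F^{2s}}.
\end{align*}
The first step is to show the ``residue'' Dirichlet series $\sum_F R_d(F) d_F^{-2s}$ converges for $\mathrm{Re}(s) > M(G)/2$ and defines an analytic function there, with value $r_d(G)$ at $s=1$; this uses the Brauer--Siegel bound $\mathrm{Res}_{s=1}\zeta_F(s) \ll_\epsilon d_F^\epsilon$, the trivial bound $\zeta_F(2) \gg 1$, and partial summation against the count $N_d(X,G) \ll X^{M(G)}$ from \eqref{eq:malle}. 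Since $M(G) < 2$ (as $\delta_d + M(G) < 2$ and $\delta_d \geq 0$), in particular $M(G)/2 < 1$, so this piece is analytic at $s=1$ except for the explicit simple pole coming from the $1/(s-1)$ factor, with residue $r_d(G) = \sum_F R_d(F)/d_F^2 > 0$; positivity is immediate since every term is positive.

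The second step is to show the tail $G_G(s) := \sum_F g_F(s) d_F^{-2s}$ is analytic and suitably bounded for $\mathrm{Re}(s) > \alpha$. Here I insert the bound \eqref{eqn:convexity}: $g_F(\sigma+it) \ll_{\epsilon,d} d_F^{\delta'(1-\sigma) + \delta_d + \epsilon}(1+|t|)^{d\delta'(1-\sigma)+\epsilon}/(\sigma-1/2)^d$, so that term-by-term
\begin{align*}
\left|\frac{g_F(\sigma+it)}{d_F^{2s}}\right| \ll_{\epsilon,d} \frac{(1+|t|)^{d\delta'(1-\sigma)+\epsilon}}{(\sigma-1/2)^d}\, d_F^{\,\delta'(1-\sigma) + \delta_d + \epsilon - 2\sigma}.
\end{align*}
Summing over $F$ via partial summation against $N_d(X,G) \ll X^{M(G)}$, the $F$-sum converges provided the exponent of $d_F$ is less than $-M(G)$, i.e.
\begin{align*}
\delta'(1-\sigma) + \delta_d + \epsilon - 2\sigma < -M(G) \iff \sigma > \frac{\delta_d + \delta' + M(G) + \epsilon}{\delta' + 2}.
\end{align*}
Letting $\epsilon \to 0$, this is exactly $\sigma > (\delta_d+\delta'+M(G))/(\delta'+2)$, the first term in the max defining $\alpha$. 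Combined with the constraint $\sigma > M(G)/2$ from the first step, we get convergence and analyticity for $\sigma > \alpha$; that $\alpha < 1$ follows from $\delta_d + M(G) < 2$ for the first term and $M(G) < 2$ for the second. On a compact subset of $\{\sigma > \alpha\}$ the convergence is uniform (the $d_F$-sum is dominated uniformly), so $G_G(s)$ is holomorphic there, and the resulting bound on $G_G(\sigma+it)$ for $|t| > 1$ is $\ll (1+|t|)^{d\delta'(1-\sigma)+\epsilon}/(\sigma-1/2)^d$, matching \eqref{DGbound}; the contribution of the first (residue) piece away from $s=1$ is holomorphic and of lower order in $|t|$, hence absorbed.

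The main obstacle is making the interchange of summation and the ensuing convergence estimates fully uniform in $F$: one must verify that the implied constant in \eqref{eqn:convexity} genuinely does not depend on $F$ (which the statement of Theorem~\ref{thm:kl5} asserts, via the uniformity of the bounds for $1/\zeta_F(2s)$ and the hypothesized uniform-in-$F$ forms of \eqref{torsion} and \eqref{subconvex}), and then to carry out the partial summation cleanly. I would handle this by fixing $\sigma_0 > \alpha$ and $T > 1$, noting $|t| \leq T$ reduces everything to the estimate already displayed, applying Abel summation $\sum_F a_F d_F^{-w}$ with $a_F$ the indicator of $F \in \mathcal{F}_G^+$, $w = 2\sigma - \delta'(1-\sigma) - \delta_d - \epsilon$, and the partial sums $\sum_{d_F \leq X} 1 = N_d(X,G) \ll X^{M(G)}$; the resulting integral $\int_1^\infty X^{M(G) - w - 1}\,dX$ converges precisely under the inequality above. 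The uniformity of the $t$-aspect bound then follows since the exponent $d\delta'(1-\sigma) + \epsilon$ is the same for every $F$ and is pulled out of the sum. Finally, meromorphic continuation past $\sigma = 1$ down to $\sigma > \alpha$ is automatic once analyticity of both pieces is established, with the only pole being the explicit simple one at $s = 1$.
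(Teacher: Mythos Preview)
Your proposal is correct and follows essentially the same argument as the paper: the same decomposition $D_G^{\cm}(s) = (s-1)^{-1}\sum_F R_d(F)d_F^{-2s} + \sum_F g_F(s)d_F^{-2s}$, the same use of $R_d(F) \ll_\epsilon d_F^\epsilon$ and the bound \eqref{eqn:convexity} for the two pieces, and the same computation of the convergence thresholds $\alpha_1 = (\delta_d+\delta'+M(G))/(\delta'+2)$ and $\alpha_2 = M(G)/2$. The only cosmetic difference is that you invoke Abel summation against $N_d(X,G)\ll X^{M(G)}$ whereas the paper splits the $F$-sum into dyadic ranges $N < d_F \leq 2N$; these are equivalent, and your treatment of the $|t|$-aspect and the absorption of the residue piece for $|t|>1$ match the paper's estimates \eqref{gest1} and \eqref{hest1}.
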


\begin{proof} Write 
\begin{align}\label{DGdecomp1}
D^{\cm}_{G}(s) = g(s) + \frac{1}{s-1}h(s),
\end{align}
where
\begin{align*}
g(s):=\sum_{F \in \mathcal{F}_G^{+}}\frac{g_F(s)}{d_F^{2s}} \quad \textrm{and} \quad h(s):=\sum_{F \in \mathcal{F}_G^{+}}\frac{R_d(F)}{d_F^{2s}}.
\end{align*}
Using the estimate  \eqref{eqn:convexity}, we have 
\begin{align*}
g(s) \ll_{\epsilon, d} \frac{(1 + |t|)^{d\delta^{\prime}(1-\sigma) + \epsilon}}{(\sigma - 1/2)^{d}}  
 \sum_{F \in \mathcal{F}_G^{+}} d_F^{\delta^{\prime}(1 - \sigma) + \delta_d - 2\sigma + \epsilon}, \quad 1/2 < \sigma \leq 1 + \epsilon.
\end{align*}
Hence, the absolute convergence of the series $g(s)$ is guaranteed by the convergence of the series
\begin{equation}\label{eq:conv1}
\sum_{F \in \mathcal{F}_G^{+}} d_F^{\delta^{\prime}(1 - \sigma) + \delta_d - 2\sigma + \epsilon}.
\end{equation}
Divide the sum over $F$ into intervals $N < d_F \leq 2N$ and let $N$ range over the integer powers of $2$.  
Then using the estimate \eqref{eq:malle}, we see that \eqref{eq:conv1} converges whenever the series
\begin{equation}\label{eq:conv2}
\sum_N N^{\delta^{\prime}(1 - \sigma) + \delta_d - 2\sigma + M(G) + \epsilon}
\end{equation}
converges. The series (\ref{eq:conv2}) converges whenever the exponent is negative, i.e., whenever
$\sigma > \alpha_1$ with $$\alpha_1 := \frac{\delta_d + \delta^{\prime} + M(G)}{\delta^{\prime}+2}$$ (for an appropriate choice of $\epsilon > 0$).
The condition $\alpha_1 < 1$ is equivalent to the condition $\delta_d + M(G) < 2$. Therefore, we see that $g(s)$ is analytic 
for $\sigma > \alpha_1$ with $\alpha_1 < 1$, and that $g(s)$ satisfies the bound 
\begin{align}\label{gest1}
g(\sigma + it) \ll_{\epsilon, d, G} \frac{(1 + |t|)^{d\delta^{\prime}(1-\sigma) + \epsilon}}{(\sigma - 1/2)^{d}}, \quad \alpha_1 < \sigma \leq 1 + \epsilon.
\end{align} 

Next, using the estimate $R_d(F) \ll_{\epsilon, d} d_F^{\epsilon}$ we have 
\begin{align*}
h(s) \ll_{\epsilon,d}\sum_{F \in \mathcal{F}_G^{+}} d_F^{-2\sigma + \epsilon}.
\end{align*}
Then a similar argument using the estimate \eqref{eq:malle} shows that the series 
$h(s)$ converges for $\sigma > \alpha_2$ with $\alpha_2 :=M(G)/2$. The condition $\alpha_2 < 1$ is ensured by 
$\delta_d + M(G) < 2$. Therefore, we see that $(s-1)^{-1}h(s)$ is meromorphic for $\sigma > \alpha_2$ with $\alpha_2 < 1$ 
with only a single (simple) pole at $s=1$ with residue 
\begin{align*}
r_d(G):=\sum_{F \in \mathcal{F}_G^{+}}\frac{R_d(F)}{d_F^{2}},
\end{align*}
and that $(s-1)^{-1}h(s)$ satisfies the bound 
\begin{align}\label{hest1}
\frac{1}{s-1}h(s) \ll_{\epsilon, d, G} 1, \quad \sigma > \alpha_2, \quad |t| > 1. 
\end{align} 
 
Finally, from (\ref{DGdecomp1}) we conclude that $D^{\cm}_{G}(s)$ has a meromorphic continuation to $\sigma > \alpha:=\max\{\alpha_1, \alpha_2\}$ with 
$\alpha < 1$ with only a single (simple) pole at $s=1$ with residue $r_d(G)$. Moreover, from the estimates (\ref{gest1}) and (\ref{hest1}) we see that 
$D^{\cm}_{G}(s)$ satisfies the bound
\begin{align*}
D^{\cm}_{G}(\sigma + it) \ll_{\epsilon, d, G} \frac{(1 + |t|)^{d\delta^{\prime}(1-\sigma) + \epsilon}}{(\sigma - 1/2)^{d}}, 
\quad \alpha < \sigma \leq 1+ \epsilon, \quad |t| > 1.
\end{align*}
\end{proof}

\begin{theorem}\label{residue} \textrm{$(i)$} Under the assumptions of Theorem \ref{thm:kl6}, we have 
\begin{align*}
N_{2d}^{\cm}(X, G) = r_d(G) X + O_{d, \epsilon}(X^{\beta(\delta_d, M(G), \delta^{\prime}) + \epsilon})
\end{align*}
where 
\begin{align*}
\beta(\delta_d, M(G), \delta^{\prime}):= 1 - \frac{1 - \alpha}{1 + d\delta'(1 - \alpha)}
\end{align*}
with $\alpha=\alpha(\delta_d, M(G), \delta^{\prime}) < 1$ defined by (\ref{alphadef}).
\vspace{0.05in}

\textrm{$(ii)$} If Hypothesis \ref{WeakMalle} is true for every pair $(d, G)$ where $G$ ranges over all  
transitive subgroups $G \leq  S_d$, then 
\begin{align*}
N_{2d}^{\mathrm{cm}}(X) = \left(\sum_{G \leq S_d}r_d(G)\right)X + O_{d, \epsilon}(X^{\beta(\delta_d, \delta^{\prime}) + \epsilon})
\end{align*}
where 
\begin{equation}\label{betadef}
\beta(\delta_d, \delta^{\prime}):= \max_{G \leq S_d}\beta(\delta_d, M(G), \delta^{\prime}) < 1.
\end{equation}
\end{theorem}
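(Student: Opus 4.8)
The plan is to extract $N_{2d}^{\cm}(X,G) = \sum_{n \le X} a(n)$ from the analytic data for $D_G^{\cm}(s) = \sum_n a(n) n^{-s}$ supplied by Theorem \ref{thm:kl6} (recall that $D_G^{\cm}$ enumerates the fields counted by $N_{2d}^{\cm}(X,G)$ via \eqref{eqn:def_phicm}), by a standard truncated Perron formula followed by a contour shift, as in the Dirichlet series method of \cite{Klu12}. For part (i), I would first note that $a(n) \ll_{\epsilon} n^{\epsilon}$ (the number of CM fields of degree $2d$ with $|d_E| = n$ is $O_{d,\epsilon}(n^{\epsilon})$), apply the truncated Perron formula on the line $\sigma = c := 1 + 1/\log X$ with truncation height $T$, incurring an error $O(X^{1+\epsilon}/T)$, and then shift the line of integration to $\sigma = \sigma_0$ for a parameter $\sigma_0 \in (\alpha, 1)$. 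By Theorem \ref{thm:kl6} the only pole of $D_G^{\cm}(s)\,X^{s}/s$ encountered in the process is the simple pole at $s=1$, whose residue is $r_d(G)X$; this is the main term.

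The next step is to bound the three remaining sides of the rectangle using the growth estimate \eqref{DGbound}. On the two horizontal segments ($\sigma \in [\sigma_0,c]$, $|t| = T$) the contribution is
\[
\ll_{\sigma_0,d,\epsilon} \frac{1}{T}\int_{\sigma_0}^{c} \frac{X^{\sigma}\,(1+T)^{d\delta'(1-\sigma)+\epsilon}}{(\sigma-1/2)^{d}}\,d\sigma \ll \frac{X^{1+\epsilon}}{T} + X^{\sigma_0}\,T^{d\delta'(1-\sigma_0)-1+\epsilon},
\]
and on the vertical segment ($\sigma = \sigma_0$, $|t|\le T$), after splitting off $|t|\le 1$ — where $D_G^{\cm}(\sigma_0+it) = O_{\sigma_0}(1)$ by analyticity away from $s=1$ — and applying \eqref{DGbound} for $1 < |t| \le T$, one obtains $\ll_{\sigma_0,d,\epsilon} X^{\sigma_0}(1 + T^{d\delta'(1-\sigma_0)+\epsilon})$. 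Collecting terms,
\[
N_{2d}^{\cm}(X,G) = r_d(G)X + O_{\sigma_0,d,\epsilon}\!\Bigl(\frac{X^{1+\epsilon}}{T} + X^{\sigma_0}\,T^{d\delta'(1-\sigma_0)+\epsilon}\Bigr).
\]
Choosing $T = X^{(1-\sigma_0)/(1+d\delta'(1-\sigma_0))}$ balances the two error terms and produces an error of order $X^{1-(1-\sigma_0)/(1+d\delta'(1-\sigma_0))+\epsilon}$. Since $u \mapsto 1 - u/(1+d\delta' u)$ is decreasing for $u = 1-\sigma_0 > 0$, letting $\sigma_0 \downarrow \alpha$ — and absorbing both the $\sigma_0$-dependence of the implied constant and the gap $\sigma_0 - \alpha$ into $\epsilon$ — yields exactly the exponent $\beta(\delta_d, M(G), \delta') = 1 - (1-\alpha)/(1+d\delta'(1-\alpha))$, which is $< 1$ because $\alpha < 1$. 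This proves (i).

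For part (ii), the observation is that every CM field $E$ of degree $2d$ has a unique maximal totally real subfield $F$, necessarily of degree $d$, and $\Gal(F^c/\Q)$ is then a transitive subgroup of $S_d$; hence $N_{2d}^{\cm}(X) = \sum_{G} N_{2d}^{\cm}(X,G)$ is a finite sum over the finitely many conjugacy classes of transitive $G \le S_d$. Under the blanket assumption that Hypothesis \ref{WeakMalle} holds for all these pairs $(d,G)$, I would apply part (i) to each summand, add up the main terms to get $(\sum_{G \le S_d} r_d(G))X$, and bound the total error by the largest of the individual ones, giving the asymptotic with $\beta(\delta_d,\delta') = \max_{G \le S_d}\beta(\delta_d, M(G), \delta')$, which is $< 1$ as a maximum of finitely many numbers each $< 1$.

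I expect the only real fuss — not an obstacle, since all the substance is already in Theorem \ref{thm:kl6} — to be the bookkeeping in the contour shift: handling the $n \approx X$ range of the Perron error via $a(n) \ll n^{\epsilon}$ (or, alternatively, via a smoothing argument), supplying the separate $O_{\sigma_0}(1)$ bound near $t = 0$ on the shifted line, and tracking how the implied constant deteriorates as $\sigma_0 \downarrow \alpha$ — in particular through the $(\sigma-1/2)^{-d}$ factor in \eqref{DGbound}, which is why $\sigma_0$ must be taken a little above $\alpha$ (and above $1/2$) before being sent to the limit. All of this is the standard Perron/Landau machinery for Dirichlet series with polynomial-growth meromorphic continuation.
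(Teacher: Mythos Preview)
Your argument is correct in outline and matches the paper's in all essentials: shift the contour past the simple pole at $s=1$ using the growth bound \eqref{DGbound} from Theorem~\ref{thm:kl6}, then optimize the truncation parameter. The paper carries this out with a smooth cutoff $\phi_Y$ and Mellin inversion rather than the truncated Perron formula; its smoothing parameter $Y$ plays exactly the role of your $T$, the unsmoothing error $O(XY^{-1})$ replaces your Perron remainder $O(X^{1+\epsilon}/T)$, and both are optimized to $X^{(1-\alpha)/(1+d\delta'(1-\alpha))}$, yielding the same exponent $\beta$.

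The one substantive caveat is your assertion $a(n) \ll_\epsilon n^\epsilon$: this is not justified by the standing hypotheses. It would require both that the number of degree-$d$ totally real $G$-fields $F$ with $d_F$ exactly equal to $m$ is $O(m^\epsilon)$ (Hypothesis~\ref{WeakMalle} only yields the pointwise bound $O(m^{M(G)})$) and that $|\Cl(F)[2]| \ll d_F^\epsilon$ (only $d_F^{\delta_d+\epsilon}$ is assumed via \eqref{torsion}); neither is known in general. Without such a bound, the $n \approx X$ portion of the Perron error is not controlled by $X^{1+\epsilon}/T$, and the optimization no longer produces the stated exponent $\beta$. The smoothing alternative you flag is therefore not merely a cosmetic option here but the route that actually works, since the rapid decay of $\widehat{\phi_Y}$ in $|t|$ removes any need for a pointwise coefficient bound; this is exactly the route the paper takes.
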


\begin{proof} Fix a smooth function $\phi : [0, 1] \rightarrow [0, 1]$ with $\phi(0) = 1$ and $\phi(1) = 0$. 
Then, for each $Y > 1$, define 
\begin{equation*}
\phi_Y(t) := 
\begin{cases}
1, & \text{if } t \in [0, 1]; \\
\phi(Y(t - 1)), & \text{if } t \in [1, 1 + Y^{-1}]; \\
0 & \text{if } t \geq 1 + Y^{-1}.
\end{cases}
\end{equation*}
Let 
\begin{align*}
\widehat{\phi_{Y}}(s):=\int_{0}^{\infty}\phi_Y(t)t^{s-1}dt, \quad \textrm{Re}(s) > 0
\end{align*} 
be the Mellin transform of $\phi_Y$. Integrating by parts $A \geq 1$ times yields the estimate 
\begin{align}\label{Mellinestimate}
\widehat{\phi_{Y}}(s) \ll Y^{-1}\left(\frac{Y}{1 + |t|}\right)^{A},
\end{align}
valid for all $s$ in any fixed vertical strip $\sigma_0 \leq \textrm{Re}(s) \leq \sigma_1$ with $\sigma_0 > 0$,
and also valid for all real numbers $A \geq 1$ by interpolation.

By construction, and then by Mellin inversion,
we have 
\begin{align*}
N_{2d}^{\cm}(X, G) = \sum_{n = 1}^X a(n) \leq \sum_{n=1}^{\infty}a(n)\phi_Y\left(\frac{n}{X}\right)
= \frac{1}{2\pi i } \int_{(1 + \epsilon)}D_G^{\mathrm{cm}}(s)\widehat{\phi_{Y}}(s)X^s ds.
\end{align*}

From the estimate (\ref{DGbound}), we see that 
\begin{align}\label{Dstrips}
D_G^{\mathrm{cm}}(s) \ll (1 + |t|)^{d\delta^{\prime}(1-\sigma)+ \epsilon}
\end{align}
in any vertical strip $1/2 < \alpha + \eta < \sigma \leq 1+ \epsilon$, $|t| > 1$, where the implied constant depends on $\epsilon, d, F$, and $\eta$.
Then using the estimates (\ref{Mellinestimate}) and (\ref{Dstrips}), 
we may shift the contour to $\textrm{Re}(s) = \alpha'$ with $\alpha < \alpha' < 1$ to get 
\begin{align*}
\frac{1}{2\pi i } \int_{(1 + \epsilon)}D_G^{\mathrm{cm}}(s)\widehat{\phi_{Y}}(s)X^sds = \widehat{\phi_{Y}}(1)r_d(G)X 
+ \frac{1}{2\pi i } \int_{(\alpha')}D_G^{\mathrm{cm}}(s)\widehat{\phi_{Y}}(s)X^sds.
\end{align*}
For any $A \geq 1$ we have the estimate
\begin{align*}
\frac{1}{2\pi i } \int_{(\alpha')}D_G^{\mathrm{cm}}(s)\widehat{\phi_{Y}}(s)X^sds \ll 
X^{\alpha'}Y^{-1}\int_{\R}(1 + |t|)^{d\delta^{\prime}(1-\alpha')+ \epsilon}\left(\frac{Y}{1 + |t|}\right)^{A}dt.
\end{align*}
Choose $A=d\delta^{\prime}(1-\alpha')+ 1 + 2\epsilon$. Then 
\begin{align*}
\frac{1}{2\pi i } \int_{(\alpha + \varepsilon)}D_G^{\mathrm{cm}}(s)\widehat{\phi_{Y}}(s)X^sds \ll X^{\alpha'} 
Y^{d\delta^{\prime}(1-\alpha')+ 2\epsilon}.
\end{align*}
Since $\widehat{\phi_{Y}}(1) = 1 + O(Y^{-1})$, we have
\begin{align*}
\widehat{\phi_{Y}}(1)r_d(G)X = r_d(G)X + O(XY^{-1}). 
\end{align*}
Then putting things together, and replacing $2 \epsilon$ by $\epsilon$, we get 
\begin{align}\label{countdecomp2}
N_{2d}^{\text{cm}}(X, G) \leq
\sum_{n=1}^{\infty}a(n)\phi_Y\left(\frac{n}{X}\right) = r_d(G)X + O(XY^{-1}) + O(X^{\alpha'} Y^{d\delta^{\prime}(1-\alpha')+ \epsilon}).
\end{align} 

Similarly, we have 
\begin{equation*}
N_{2d}^{\text{cm}}(X, G) \geq
\sum_{n=1}^{\infty}a(n)\phi_Y\Big(\frac{n}{X - XY^{-1}}\Big), 
\end{equation*}
for which the same estimate in \eqref{countdecomp2} also holds (since we may interchange $X$ and $X - XY^{-1}$
in \eqref{countdecomp2}, within the error terms given there), so that in fact we have
\begin{align*}
N_{2d}^{\mathrm{cm}}(X, G) = r_d(G)X + O(XY^{-1}) + O(X^{\alpha'} Y^{d\delta^{\prime}(1-\alpha^{\prime})+ \epsilon}).
\end{align*}

We optimize (apart from epsilon factors) by choosing 
$\alpha' = \alpha + \epsilon$ and 
$
Y = X^{\frac{1-\alpha}{1 + d\delta^{\prime}(1-\alpha)}}
$
, so as to obtain for each $\epsilon > 0$ that
\begin{align*}
N_{2d}^{\mathrm{cm}}(X, G) = r_d(G)X + O_{\epsilon}(X^{\beta(\delta_d, M(G), \delta^{\prime}) + \epsilon}),
\end{align*}
where 
\begin{align*}
\beta(\delta_d, M(G), \delta^{\prime}):= 1 - \frac{1 - \alpha}{1 + d\delta'(1 - \alpha)}.
\end{align*}
This proves part $(i)$. Part $(ii)$ follows by summing the asymptotic formula in $(i)$ over all transitive subgroups $G \leq S_d$.
\end{proof}

\subsection{Upper bounds  for $N_{2d}^{\neg \Weyl}(X, G)$} Let $K$ be a number field of degree $d$ with $\Gal(K^c/\Q) \cong G$, and 
let $L$ be a quadratic extension of $K$. Then $\Gal(L^c/\Q)$ embeds as a subgroup of the wreath product $C_2 \wr G$ (see Proposition \ref{embed}). Clearly, we have 
\begin{align*}
N_{2d}^{\neg \Weyl}(X, G) \ll Y(X,G),
\end{align*}
where 
\begin{align}\label{eqn:def_Y}
Y(X,G):=\# \{ L/K: ~ \Gal(L^c/\Q) \ncong C_2 \wr G, ~ \Gal(K^c/\Q) \cong G, ~ [L:K]=2, |d_L| \leq X \}.
\end{align}
Therefore, it suffices to give an upper bound for $Y(X,G)$. 

The extensions counted by $Y(X,G)$ are distinguished by the following fact:
for each prime $p$ unramified in $K/\Q$ but ramified in $L/K$ (so that $p \mid d_L$),
we must in fact have $p^2 \mid d_L$ (see \cite[Lemma 4]{Klu12}). 

Let
\begin{align*}
\mathcal{K}_G(X^{1/2}) :=\{K/\Q:~ \textrm{Gal}(K^c/\Q) \cong G, ~ |d_K| \leq X^{1/2} \}.
\end{align*}
As in \cite[p. 9-10]{Klu12}, we have the bound
\begin{align}\label{eq:kluners910}
Y(X,G) \leq \sum_{K \in \mathcal{K}_G(X^{1/2})}O_{\epsilon, d}\left(\frac{X^{\frac12 + \epsilon}}{|d_K|^2}|\mathrm{Cl}(K)[2]|\right).
\end{align}
We briefly recall the proof. Each $L$ counted in \eqref{eqn:def_Y} satisfies $$d_L = d_K^2 \mathcal{N}_{K/\Q}(\mathfrak{D}_{L/K})$$ with
$\mathcal{N}_{K/\Q}(\mathfrak{D}_{L/K}) = a b^2$, where $a$ is only divisible by primes dividing $d_K$. Since each such prime can only
divide $a$ with bounded multiplicity, the problem is reduced to proving (for each positive integer $n$)
that the number of quadratic extensions $L/K$ with
$\mathcal{N}_{K/\Q}(\mathfrak{D}_{L/K}) = n$ is $O_{d, \epsilon}(|\Cl(K)[2]| n^{\epsilon})$, and this is done by bounding the $2$-torsion
in the relevant ray class group. 

Continuing then, applying the bound (\ref{torsion}) to \eqref{eq:kluners910} gives
\begin{equation*}
N_{2d}^{\neg \Weyl}(X, G) \ll_{\epsilon, d} 
 X^{\frac{1}{2} + \epsilon} \sum_{K \in \mathcal{K}_G(X^{1/2})} |d_K|^{-1 + \delta_d}.
\end{equation*}
Again, divide the sum over $K$ into intervals with $N < |d_K| \leq 2N$ and let 
$N$ range over the integer powers of $2$. Then applying the estimate \eqref{eq:malle} gives
\begin{equation*}
N_{2d}^{\neg \Weyl}(X, G) \ll_{\epsilon, d} 
 X^{\frac12 + \epsilon} \sum_{N} N^{-1 + \delta_d + M(G)}.
 \end{equation*}
If $\delta_d + M(G) \leq 1$ then
\begin{align}\label{notWeyl1}
N_{2d}^{\neg \Weyl}(X, G) \ll_{\epsilon, d}  X^{\frac12 + \epsilon},
\end{align}
while if $\delta_d + M(G) > 1$ then 
\begin{equation}\label{notWeyl2}
N_{2d}^{\neg \Weyl}(X, G) \ll_{\epsilon, d} 
 X^{\frac12 + \frac{-1 + \delta_d + M(G)}{2} + \epsilon}.
\end{equation}
The exponent in (\ref{notWeyl2}) is less than $1$ (for an appropriate choice of $\epsilon > 0$) provided that
$\delta_d + M(G) < 2$.

\subsection{Proof of Theorem \ref{main_thm}} Using Theorem \ref{residue} and estimates \eqref{notWeyl1} and \eqref{notWeyl2}, we have 
\begin{equation*}
\frac{ N_{2d}^{\Weyl}(X, G)}{N_{2d}^{\cm}(X, G)} = \frac{N_{2d}^{\cm}(X, G) - N_{2d}^{\neg \Weyl}(X, G)}{N_{2d}^{\cm}(X, G)} 
= 1 + O_{d, G, \epsilon}(X^{-C_1(\delta_d,M(G)) + \epsilon}),
\end{equation*}
where 
\begin{align*}
C_1(\delta_d,M(G)):= 
\begin{cases}
1/2, & \textrm{if $\delta_d + M(G) \leq 1$}\\
\displaystyle 1- \frac{\delta_d + M(G)}{2}, &  \textrm{if $1 < \delta_d + M(G) < 2$}.
\end{cases}
\end{align*}
This proves Theorem \ref{main_thm}. 

\subsection{Proof of Theorem \ref{WeylDensity}}\label{proofsection} 
As above we have
\begin{align*}
\frac{N_{2d}^{\Weyl}(X, G)}{N_{2d}^{\cm}(X)} =  
\frac{N_{2d}^{\cm}(X, G) - N_{2d}^{\neg \Weyl}(X, G)}{N_{2d}^{\cm}(X)} = \frac{N_{2d}^{\cm}(X, G)}{N_{2d}^{\cm}(X)} 
+ O_{d, G, \epsilon}(X^{-C_1(\delta_d, M(G))+\epsilon}).
\end{align*}
Also by Theorem \ref{residue} we have
\begin{align*}
\frac{N_{2d}^{\cm}(X, G)}{N_{2d}^{\cm}(X)} = \frac{r_d(G)}{\sum_{G \leq S_d}r_d(G)} + 
O_{d, \epsilon}(X^{-1 + \beta(\delta_d, \delta^{\prime})})
\end{align*}
so that
\begin{align}\label{last}
\frac{N_{2d}^{\Weyl}(X, G)}{N_{2d}^{\cm}(X)} = \frac{r_d(G)}{\sum_{G \leq S_d}r_d(G)}
+ O_{d,G,\epsilon}(X^{-C_2(\delta_d, M(G), \delta^{\prime}) + \epsilon}),
\end{align}
where 
\begin{equation}\label{eq:def_C2}
C_2(\delta_d, M(G), \delta^{\prime}):=\min \{C_1(\delta_d, M(G)), 
1 - \beta(\delta_d, \delta') \} > 0.
\end{equation}
This proves (\ref{full1}). To prove (\ref{full2}), we sum over all $G \leq S_d$ in (\ref{last}) to get 
\begin{align*}
\frac{N_{2d}^{\Weyl}(X)}{N_{2d}^{\cm}(X)} = 1 + O_{d,\epsilon}(X^{-C_3(\delta_d, \delta^{\prime}) + \epsilon}),
\end{align*} 
where 
\begin{equation}\label{eq:def_C3}
C_3(\delta_d,\delta^{\prime}):= \min_{G \leq S_d}C_2(\delta_d, M(G), \delta^{\prime}) > 0.
\end{equation}
This proves Theorem \ref{WeylDensity}. 
\qed

\section{Proof of Theorem \ref{ColmezWeyl}}\label{wreathsection}

In this section we review some basic facts about wreath products of groups, discuss the 
structure of Galois groups of CM fields, and prove Theorem \ref{ColmezWeyl}. 

\subsection{Wreath products}\label{wreath} 
We begin by reviewing some basic 
facts about wreath products of groups (see e.g. \cite{DM96}). 
Let $H$ and $K$ be groups, and suppose that $\theta: H \rightarrow \op{Aut}(K)$ 
is a homomorphism, where we write $\theta(h) = \theta_h$. This gives a (left) group 
action of $H$ on $K$ defined by $(h,k) \mapsto \theta_h(k)$.
Recall that the semidirect product of $K$ and $H$ with respect to $\theta$ is the group
\begin{align*}
K \rtimes_{\theta} H := \{(k, h) \suchthat k \in K, h \in H \},
\end{align*}
where the group operation is defined by
\begin{align*}
(k_1, h_1) (k_2, h_2) := (k_1 \theta_{h_1}(k_2), h_1 h_2).
\end{align*}
When understood, we suppress $\theta$ in our notation for the semidirect product.

Now, let $\Omega$ be an arbitrary set, and let $K^{\Omega}$ denote the set of all functions $f: \Omega \rightarrow K$. 
Pointwise multiplication of functions gives $K^{\Omega}$ the structure of a group. A (left) group action of $H$ on $\Omega$ gives a homomorphism
\begin{align*}
\theta: H &\longrightarrow \op{Aut}(K^{\Omega})\\
h &\longmapsto \theta_{h}
\end{align*}
defined by $\theta_{h}(f)(\omega):= f(h^{-1} \cdot \omega)$ for every $\omega \in \Omega$ and every $f \in K^{\Omega}$. In turn, 
this gives a (left) group action of $H$ on $K^{\Omega}$ defined by $(h,f) \mapsto \theta_h(f)$. The 
\textit{wreath product} of $K$ and $H$ with respect to $\theta$ is defined by 
\begin{align*}
K \wr_{\Omega} H := K^{\Omega} \rtimes_{\theta} H.
\end{align*}

When the set $\Omega = \{ \omega_1, \dots \omega_n \}$ is finite, 
it is customary to identify $K^{\Omega}$ with the direct product $K^n$ via the isomorphism 
$f \mapsto (f(\omega_1), \dots, f(\omega_n))$. In particular, if $\Omega = \{ 1, \dots, n \}$ and $H \leq S_n$ 
is a group of permutations, then we have a (left) group action of $H$ on $\Omega$ in the usual way, 
and the corresponding action of $H$ on $K^n$ is by permutation of the components, i.e., 
if $\sigma \in H \leq S_n$ and $x = (x_1, \dots, x_n) \in K^n$, then
\begin{align*}
\sigma \circ x := (x_{\sigma^{-1}(1)}, \dots, x_{\sigma^{-1}(n)}),
\end{align*}
and in this case we write $K \wr H$ instead of $K \wr_{\{ 1, \dots, n \}} H$. 

With this notation, if $G \leq S_d$ is a transitive subgroup, then 
the wreath product $C_2 \wr G$ from the introduction is given by  
$$C_2 \wr G = C_2 \wr_{\{ 1, \dots, d \}} G = C_2^d \rtimes  G.$$ 
The wreath product determines a short exact sequence 
\begin{equation*}
\begin{tikzcd}
1 \arrow{r} & C_2^d \arrow{r} &  C_2 \wr G \arrow{r} & G \arrow{r} & 1.
\end{tikzcd}
\end{equation*}

\subsection{Galois groups of CM fields} We next discuss the structure of Galois groups of CM fields.

\begin{proposition}\label{embed} Let $K$ be a number field of degree $d$ with $\mathrm{Gal}(K^c/\Q) \cong G \leq S_d$, 
and let $L$ be a quadratic extension of $K$. Then $\mathrm{Gal}(L^c/\Q)$ embeds as a subgroup of 
the wreath product $C_2 \wr G$.
\end{proposition}

\begin{proof} 

Choose a primitive element $\alpha_1$ with $K = \Q(\alpha_1)$ and let 
$\alpha_1, \dots, \alpha_d$ be its conjugates, so that
$K^c = \Q(\alpha_1, \dots, \alpha_d)$ and $L^c=\Q(\sqrt{\alpha_1}, \ldots, \sqrt{\alpha_d})$.
For each $g \in \mathrm{Gal}(L^c/\Q)$ and $i \in \{1, \dots, d\}$, we have
\begin{equation}\label{eq:wreath1}
g(\alpha_i)=\alpha_j, \ \ \ g(\sqrt{\alpha_i}) = \pm \sqrt{\alpha_j}
\end{equation}
for some $j \in \{1, \dots, d \}$ and choice of sign $\pm$. We define a function
\begin{align*}
\phi : \mathrm{Gal}(L^c/\Q) & \longrightarrow C_2 \wr G = \{\pm 1\}^d  \rtimes G\\
g & \longmapsto (x_g, \sigma_g),
\end{align*}
where (matching \eqref{eq:wreath1}) $\sigma_g(i) = j$ and $x_g:=(x_{g,1}, \ldots, x_{g,d}) \in \{\pm 1\}^{d}$ is the vector 
whose $j$-th component is given by  
\begin{align*}
x_{g, j} := \frac{g(\sqrt{\alpha_i})}{\sqrt{\alpha_j}}=\frac{g\big(\sqrt{\alpha_{\sigma_g^{-1}(j)}}\big)}{\sqrt{\alpha_j}}.
\end{align*}
In particular, we have 
\begin{align*}
g(\sqrt{\alpha_i})=x_{g, \sigma_g(i)}\sqrt{\alpha_{\sigma_g(i)}}
\end{align*}
for $i \in \{1, \ldots, d\}$.

The data of $x_g$ and $\sigma_g$ determines $g(\sqrt{\alpha_i})$ for each $i$, and hence it determines
$g$, so that $\phi$ is injective.

We next prove that $\phi$ is a homomorphism. Let $g, h \in \mathrm{Gal}(L^c/\Q)$. 
Then by definition of the wreath product, we have 
\begin{align*}
\phi(gh) = \phi(g)\phi(h)  \quad
\Longleftrightarrow  \quad (x_{gh}, \sigma_{gh})=(x_{g}(\sigma_{g} \circ x_h), \sigma_{g}\sigma_{h})
\end{align*}
where 
\begin{align*}
\sigma_{g} \circ x_h := (x_{h,\sigma_{g}^{-1}(1)}, \ldots, x_{h,\sigma_{g}^{-1}(d)}). 
\end{align*}

By the isomorphism $\textrm{Gal}(K^c/\Q) \cong G$, we have 
$\sigma_{gh}=\sigma_{g}\sigma_{h}$. Thus, it remains to prove that 
\begin{align}\label{equal}
x_{gh}=x_{g}(\sigma_{g} \circ x_h).
\end{align}
Since the $\sigma_{gh}(i)$-th component of $\sigma_g \circ x_h$ is given by 
\begin{align*}
x_{h,\sigma_{g}^{-1}(\sigma_{gh}(i))}=x_{h, \sigma_h(i)}, 
\end{align*}
we see that (\ref{equal}) is equivalent to 
\begin{align*}
x_{gh, \sigma_{gh}(i)}=x_{g, \sigma_{gh}(i)}x_{h,\sigma_h(i)}
\end{align*}
for $i \in \{1, \ldots, d\}$. We compute 
\begin{align*}
(gh)(\sqrt{\alpha_i})&=g(h(\sqrt{\alpha_i}))\\
&=g(x_{h,\sigma_h(i)}\sqrt{\alpha_{\sigma_h(i)}})\\
&=x_{h,\sigma_h(i)}g(\sqrt{\alpha_{\sigma_h(i)}})\\
&=x_{h,\sigma_h(i)}x_{g,\sigma_g(\sigma_{h}(i))}\sqrt{\alpha_{\sigma_g(\sigma_{h}(i))}}\\
&=x_{h,\sigma_h(i)}x_{g,\sigma_{gh}(i)}\sqrt{\alpha_{\sigma_{gh}(i)}},
\end{align*}
and thus 
\begin{align*}
x_{gh,\sigma_{gh}(i)}:=\frac{(gh)(\sqrt{\alpha_i})}{\sqrt{\alpha_{\sigma_{gh}(i)}}}=x_{h,\sigma_h(i)}x_{g,\sigma_{gh}(i)}.
\end{align*}
This completes the proof.
\end{proof}

\begin{proposition}\label{non-abelian} 
Let $d \geq 2$ and suppose that $G$ is a transitive subgroup of $S_d$. Then the wreath product $C_2 \wr G$ is non-abelian. In particular, 
if $E$ is a $G$-Weyl CM field of degree $2d \geq 4$, then $E/\Q$ is non-abelian.
\end{proposition}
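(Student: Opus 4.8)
The plan is to prove the two assertions in turn, the first being the group-theoretic heart of the matter and the second a quick corollary. First I would show that $C_2 \wr G = C_2^d \rtimes G$ is non-abelian whenever $G \leq S_d$ is transitive with $d \geq 2$. The key observation is that transitivity forces the permutation action of $G$ on the $d$ coordinates of $C_2^d$ to be non-trivial: since $d \geq 2$ and the action is transitive, there is some $\sigma \in G$ and some index $i$ with $\sigma^{-1}(i) \neq i$. I would then exhibit explicitly two elements that fail to commute. Take $x = (x_1, \dots, x_d) \in C_2^d$ with $x_i$ the non-trivial element of $C_2$ and all other coordinates trivial, and consider the elements $(x, 1)$ and $(0, \sigma)$ of the semidirect product. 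Using the multiplication rule $(k_1, h_1)(k_2, h_2) = (k_1 \theta_{h_1}(k_2), h_1 h_2)$ and the formula $\sigma \cdot x = (x_{\sigma^{-1}(1)}, \dots, x_{\sigma^{-1}(d)})$, one computes that $(0,\sigma)(x,1) = (\sigma \cdot x, \sigma)$ while $(x,1)(0,\sigma) = (x, \sigma)$; since $\sigma \cdot x \neq x$ (their supports $\{\sigma(i)\}$ and $\{i\}$ differ), these two products are distinct, so $C_2 \wr G$ is non-abelian.

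For the second assertion, suppose $E$ is a $G$-Weyl CM field of degree $2d \geq 4$, so $d \geq 2$ and $\Gal(E^c/\Q) = C_2 \wr G$ by Definition \ref{G-weyldef}. By the first part, $\Gal(E^c/\Q)$ is non-abelian. Now I would invoke the standard fact that $E/\Q$ is abelian if and only if $E = E^c$ (i.e.\ $E$ is Galois over $\Q$) and $\Gal(E/\Q)$ is abelian; more directly, if $E/\Q$ were abelian then $E$ would be Galois over $\Q$, hence $E^c = E$, and $\Gal(E^c/\Q) = \Gal(E/\Q)$ would be abelian, contradicting what we just proved. Therefore $E/\Q$ is non-abelian.

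The only step requiring any genuine care is verifying that transitivity of $G$ on $\{1, \dots, d\}$ with $d \geq 2$ really does supply a $\sigma \in G$ moving some index, and then bookkeeping the semidirect-product multiplication correctly; both are routine given the conventions set up in Section \ref{wreath}. I do not anticipate a substantive obstacle: the proposition is essentially the assertion that a non-trivial permutation action produces non-commuting elements in the associated wreath product, together with the elementary implication ``abelian $\Rightarrow$ Galois closure equals the field.'' One minor point to state cleanly is why the condition ``$G$ transitive on $d \geq 2$ points'' is used rather than merely ``$G$ non-trivial'': it guarantees the action on coordinates is non-trivial, which is exactly what makes the semidirect product fail to be a direct product.
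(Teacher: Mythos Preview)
Your proposal is correct and takes essentially the same approach as the paper: both exhibit explicit non-commuting elements in $C_2^d \rtimes G$ by using a $\sigma \in G$ that moves some index (supplied by transitivity and $d \geq 2$) together with a standard basis vector in $C_2^d$, and both deduce the statement about $E/\Q$ from the fact that an abelian extension equals its own Galois closure. The only cosmetic difference is the particular pair chosen---the paper takes $(x,\sigma)$ and $(x,\mathrm{id})$ with $x=(1,0,\dots,0)$ and $\sigma^{-1}(1)=d$, whereas you take $(0,\sigma)$ and $(x,1)$---but the computations and the underlying idea are the same.
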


\begin{proof} As we have seen, the elements of
the wreath product $C_2 \wr G$ take the form $(x, \sigma)$ with $x = (x_1, \dots, x_d) \in C_2^d$ and $\sigma \in G$ 
a permutation of the set $\{1, \dots, d \}$, with multiplication given by
\begin{align*}
(x, \sigma) (y, \tau) = (x (\sigma \circ y), \sigma \tau)
\end{align*}
where 
\begin{align*}
\sigma \circ y := (y_{\sigma^{-1}(1)}, \ldots, y_{\sigma^{-1}(d)}).
\end{align*}
It now suffices to exhibit two elements which do not commute; for example, choose $x = y = (-1, 1, \dots, 1)$,
$\tau = \op{id}$, and any $\sigma$ such that $\sigma^{-1}(-1) = d$ (the existence of which is ensured by the transitivity of $G$). 

Finally, if $E$ is a $G$-Weyl CM field of degree $2d \geq 4$, then we have 
$\textrm{Gal}(E^c/\Q) \cong C_2 \wr G$, so that $E/\Q$ is non-abelian.
\end{proof}

Now, let $E$ be a CM field of degree $2d$ with maximal totally real subfield $F$, 
and let $G$ be the transitive subgroup of $S_d$ with $\mathrm{Gal}(F^c/\Q) \cong G$.
Choose a primitive element $\alpha_1$ with $F = \Q(\alpha_1)$ and let 
$\alpha_1, \dots, \alpha_d$ be its conjugates, so that
$F^c = \Q(\alpha_1, \dots, \alpha_d)$ and
$E^c=\Q(\sqrt{-\alpha_1}, \ldots, \sqrt{-\alpha_d})$.
For each $g \in \mathrm{Gal}(E^c/F^c)$ and $i \in \{1, \dots, n\}$, we have
\begin{equation*}
g(\alpha_i)=\alpha_i, \ \ \ g(\sqrt{-\alpha_i}) = \pm \sqrt{-\alpha_i}
\end{equation*}
for some choice of sign $\pm$. We define a function
\begin{align*}
\psi : \mathrm{Gal}(E^c/F^c) & \longrightarrow \{\pm 1\}^d \\
g & \longmapsto y_g,
\end{align*}
where $y_g:=(y_{g,1}, \ldots, y_{g,d}) \in \{\pm 1\}^d$ is the vector whose $i$-th component is given by 
\begin{align*}
y_{g, i} := \frac{g(\sqrt{-\alpha_i})}{\sqrt{-\alpha_i}}.
\end{align*}
Then arguing as in Proposition \ref{embed}, we see that $\psi$ is an injective homomorphism.

By Galois theory, we have the short exact sequence
\begin{equation}\label{galoisexact}
\begin{tikzcd}
1 \arrow{r} &\Gal(E^c/F^c) \arrow{r} &\Gal(E^c/\Q) \arrow{r} &\Gal(F^c/\Q) \arrow{r} & 1.
\end{tikzcd}
\end{equation}
The injective homomorphism $\psi$ 
implies that $\Gal(E^c/F^c) \cong C_2^v$ for some $1 \leq v \leq d$. Then from (\ref{galoisexact}) we 
get a short exact sequence
\begin{equation}\label{imprimitivity}
\begin{tikzcd}
1 \arrow{r} &C_2^v \arrow{r} & \Gal(E^c/\Q) \arrow{r} &  G \arrow{r} & 1.
\end{tikzcd}
\end{equation}
The exact sequence (\ref{imprimitivity}) is called the \textit{imprimitivity sequence} for $\Gal(E^c/\Q)$ 
(see \cite[p. 4]{Dod84}). 

Recall that if $\Phi$ is a CM type for $E$, then the \textit{reflex field} associated to the CM pair $(E, \Phi)$ is the field
\begin{align*}
E_{\Phi}:=  \Q(\{ \op{Tr}_{\Phi}(a) \suchthat a \in E \}),
\end{align*}
where $$\op{Tr}_{\Phi}(a):= \sum \limits_{\phi \in \Phi}\phi(a)$$ 
is the \textit{type trace} of $a \in E$. The Reflex Degree Theorem of Dodson \cite[p. 5]{Dod84} states that the reflex degree $[E_{\Phi}:\Q]$ is related 
to imprimitivity sequences in the following way: if $G \leq S_d$ is the transitive subgroup such that $\Gal(F^c/\Q) \cong G$, and 
\begin{equation}\label{is2}
\begin{tikzcd}
1 \arrow{r} &C_2^v \arrow{r} &\Gal(E^c/\Q) \arrow{r} & G \arrow{r}  & 1
\end{tikzcd}
\end{equation}
is the imprimitivity sequence for $\Gal(E^c/\Q)$, then there is a subgroup $S$ of $G$ such that
\begin{align}\label{ReflexDegree}
[E_{\Phi}:\Q] = 2^v [G:S].
\end{align}
The subgroup $S$ is defined in \cite[p. 5]{Dod84} (the so-called \textit{splitting subgroup}), 
although we do not need an explicit description for our purposes. 

With these preliminaries, we now proceed to the proof of Theorem \ref{ColmezWeyl}.

\subsection{Proof of Theorem \ref{ColmezWeyl}} It is known that the absolute Galois group 
$\Gal(\overline{\Q}/\Q)$ acts on the set of CM types $\Phi(E)$ of $E$. 
Importantly, one can prove that the size of the $\Gal(\overline{\Q}/\Q)$-orbit of a CM type $\Phi$ equals the degree of the reflex field 
$E_{\Phi}$ over $\Q$; that is (see \cite[Proposition 6.3]{BSM16}),
\begin{align*}
[E_{\Phi}:\Q] = \# (\Gal(\overline{\Q}/\Q) \cdot \Phi).
\end{align*}
Since there are exactly $2^d$ CM types in $\Phi(E)$, 
this shows that 
\begin{align}\label{reflex1}
[E_{\Phi}:\Q] \leq 2^d, 
\end{align}
and moreover, that the action of 
$\Gal(\overline{\Q}/\Q)$ on $\Phi(E)$ is transitive if and only if $[E_{\Phi}:\Q] = 2^d$.

Now, suppose that $E$ is a $G$-Weyl CM field. Then 
$$\Gal(E^c/\Q) \cong C_2 \wr G = C_2^d \rtimes  G,$$ and in particular, we have $|\Gal(E^c/\Q)| = 2^d |G|$. On the other hand, 
by the imprimitivity sequence (\ref{is2}) we have $G \cong \Gal(E^c/\Q)/C_2^{v}$, so that 
$|\Gal(E^c/\Q)| = 2^v |G|$. Hence $v=d$, and it follows from (\ref{ReflexDegree}) that 
\begin{align}\label{reflex2}
[E_{\Phi}:\Q] = 2^d [G:S] \geq 2^d.
\end{align}
From inequalities (\ref{reflex1}) and (\ref{reflex2}), we conclude that $[E_{\Phi}:\Q]=2^d$, and thus 
the action of $\Gal(\overline{\Q}/\Q)$ on $\Phi(E)$ is transitive. 

Finally, in \cite[Proposition 5.1]{BSM16}, it is 
shown as a consequence of the recently proved averaged Colmez conjecture \cite{AGHM15, YZ15} that if the action of 
$\Gal(\overline{\Q}/\Q)$ on $\Phi(E)$ is transitive, then the Colmez conjecture is true for $E$ and takes the form (\ref{nice}). 
This proves Theorem \ref{ColmezWeyl}. \qed

\section{Some known cases of Hypothesis \ref{WeakMalle}}\label{known}

In this section, we give a table which lists some known cases of Hypothesis \ref{WeakMalle}. We also give  
a table that lists cases of Hypothesis \ref{WeakMalle} which would follow from a sufficiently strong $2$-torsion exponent $\delta_d$.  

For $d \geq 6$, the lists are extracted from the tables in \cite{Du17}; in particular, as Dummit notes, 
the labeling of the transitive subgroups 
is the standard one originally given by Conway, Hulpke, and McKay \cite{CHM98}. 
For simplicity, when summarizing results in the tables, we sometimes state upper bounds which are weaker than what is known. 

For a transitive subgroup $G \leq S_d$, Table \ref{Table1} gives a list of general pairs $(d,G)$ for which Hypothesis \ref{WeakMalle} is known
to hold. In each case, the upper bound in the Malle conjecture (\ref{wmc}) is known, and we may take $\delta_d = 1/2$.
The table does not necessarily contain a complete list of all known results, and it should be possible to obtain additional cases of 
Hypothesis \ref{WeakMalle}. Among other possibilities, Wang informs us that her methods can handle additional cases such
as $d = 9$, $G = S_3 \times S_3$, and Mehta \cite{Meh17} is presently extending the results of \cite{Klu06} to Frobenius groups.
 
We also note that when $G$ satisfies Hypothesis \ref{WeakMalle}, so does $C_2 \wr G$ by the argument of Kl\"uners
\cite{Klu12} which we are adapting.

\small
\begin{table}[H]
\caption{General pairs $(d, G)$ for which Hypothesis \ref{WeakMalle} holds.} \label{Table1} 
\vspace{0.10in}
{\tabulinesep=1.0mm
\begin{tabu}{ |c|c|c| }
  \hline
 $(d, G)$ & Reference & Upper bound $N_d(X, G) \ll X^{M(G)}$ \\ \hline
 $d\geq 1$ and $G$ abelian & \cite{Mak85} & $X^{\frac{1}{|G|(1 - 1/\ell)}+ \epsilon}$, \ $\ell$ the smallest prime divisor of $|G|$ \\ \hline
$d = \ell$ prime, $G = D_\ell$ & \cite{Klu06, CT17} & $X^{\frac{3}{\ell - 1} - \frac{1}{\ell (\ell - 1)} + \epsilon}$ \\ \hline
$d \geq 1$ and $G$ a $p$-group & \cite{KM04} & $X^{1 + \epsilon}$ \\ \hline
$d \geq 5$ and $|G|=d$ & \cite{EV06}  &  $X^{\frac{3}{8} + \epsilon}$ \\ \hline
$d = 3$, any $G \leq S_3$ transitive & \cite{DH71} & $X^1$ \\ \hline
$d = 4$, any $G \leq S_4$ transitive & \cite{CDO02, Bha05} & $X^1$ \\ \hline
$d = 5$, any $G \leq S_5$ transitive & \cite{Bha10} & $X^1$ \\ \hline
$d = 3|A|$, $S_3 \times A$ with any $A$ abelian & \cite{Wan17} & $X^{1/|A|}$ \\ \hline
$d = 4|A|$, $S_4 \times A$ with any $A$ abelian & \cite{Wan17} & $X^{1/|A|}$ \\ \hline
\end{tabu}}
\end{table}
\normalsize

\begin{remark}
As observed previously, the condition $|G|=d$ 
is equivalent to all number fields counted by $N_{d}(X,G)$ being Galois over $\Q$. This case follows from \cite[Proposition 1.3]{EV06}. 
\end{remark}

Table \ref{Table4} lists specific pairs $(d, G)$ with $6 \leq d \leq 8$, for which an upper bound 
$N_d(X, G) \ll X^{M(G)}$ is known for some $M(G) < 2$, but such that $\delta_d + M(G) > 2$. The last column lists a range of $2$-torsion exponents 
which would suffice for $\delta_d +  M(G) < 2$ to hold. 

The results were obtained by Dummit \cite{Du17}.

\begin{table}[H]
\caption{Specific pairs $(d,G)$ for which Hypothesis \ref{WeakMalle} holds for any $2$-torsion exponent 
$\delta_d$ in the specified range.}\label{Table4}
\vspace{0.10in}
{\tabulinesep=1.0mm
\begin{tabu}{ |c|c|c|c|c| }
\hline
Label \# & Order of group & Isomorphic to & Upper bound $N_d(X, G) \ll X^{M(G)}$ & Range of $\delta_d$\\
\hline
\hline
\multicolumn{5}{|c|}{Transitive subgroups of $S_6$ satisfying $N_6(X, G) \ll X^{M(G)}$ with $M(G) < 2$ $(d = 6)$} \\
\hline
6T5 & 18 & $F_{18}$ & $X^{7/4 + \epsilon}$ & $\delta_6 < \frac{1}{4}$\\ \hline
6T12 & 60 & $A_5$ & $X^{8/5 + \epsilon}$ & $\delta_6 < \frac{2}{5}$\\ \hline
6T14 & 120 & $S_5$ & $X^{19/10 + \epsilon}$ & $\delta_6 < \frac{1}{10}$\\ \hline
6T15 & 360 & $A_6$ & $X^{19/10 + \epsilon}$ & $\delta_6 < \frac{1}{10}$\\ \hline
\hline
\multicolumn{5}{|c|}{Transitive subgroups of $S_7$ satisfying $N_7(X, G) \ll X^{M(G)}$ with $M(G) < 2$ $(d = 7)$} \\ \hline
7T2 & 14 & $D_7$ & $X^{19/12 + \epsilon}$ & $\delta_7 < \frac{5}{12}$ \\ \hline
7T3 & 21 & $F_{21}$ & $X^{7/4 + \epsilon}$ & $\delta_7 < \frac{1}{4}$ \\ \hline
7T5 & 168 & $PSL_2(\mathbb{F}_7)$ & $X^{11/6 + \epsilon}$ & $\delta_7 < \frac{1}{6}$ \\ \hline \hline
\multicolumn{5}{|c|}{Transitive subgroups of $S_8$ satisfying $N_8(X, G) \ll X^{M(G)}$ with $M(G) < 2$ $(d = 8)$} \\ \hline
8T25 & 56 & $F_{56}$ & $X^{27/14 + \epsilon}$ & $\delta_8 < \frac{1}{14}$ \\ \hline
\end{tabu}}
\end{table}

\section{Acknowledgments}

We would like to thank Evan Dummit, Wei-Lun Tsai, Jiuya Wang, and Matt Young for helpful comments.

Barquero-Sanchez and Masri's work was partially supported by the NSF Grants DMS-1162535 and DMS-1460766, and by the Simons Foundation Grant \#421991.

Thorne's work was partially supported by the National Security Agency under a Young Investigator Grant. Part of his work was done 
at the Mathematical Sciences Research Institute in Berkeley, CA in Spring 2017, supported by NSF Grant DMS-1440140.


\begin{thebibliography}{}

\newcommand{\etalchar}[1]{$^{#1}$}

\bibitem{AGHM15} F. Andreatta, E. Z. Goren, B. Howard, and K. Madapusi Pera,
{\em Faltings heights of abelian varieties with complex multiplication.\/} Ann. of Math. \textbf{187} (2018), 391--531.

\bibitem{BSM16} A. Barquero-Sanchez and R. Masri, {\em On the Colmez conjecture for non-abelian CM fields.\/} 
Research in the Mathematical Sciences, Special Collection in honor of Don Zagier's 65th birthday (2018), \textbf{5}:10. 

 
\bibitem{Bha05} M. Bhargava, {\em The density of discriminants of quartic rings and fields.\/} 
Ann. of Math. \textbf{162} (2005), 1031--1063.

\bibitem{BW08} M. Bhargava and M. Matchett Wood, {\em The density of discriminants of $S_3$-sextic number fields.\/} 
Proc. Amer. Math. Soc. \textbf{136} (2008), 1581--1587.

\bibitem{Bha10} M. Bhargava, {\em The density of discriminants of quintic rings and fields.\/} 
Ann. of Math. \textbf{172} (2010), 1559--1591.

\bibitem{BSTTTZ17} 
M. Bhargava, A. Shankar, T. Taniguchi, F. Thorne, J. Tsimerman, and Y. Zhao, 
{\em Bounds on $2$-torsion in class groups of number fields and integral points on elliptic curves.\/} 
\texttt{arXiv:1701.02458 [math.NT]} (version 1, 10 Jan 2017)

\bibitem{CO12}  C. Chai and F. Oort, {\em Abelian varieties isogenous to a Jacobian.\/} Ann. of Math. \textbf{176} (2012), 589--635.



\bibitem{CDO02} H. Cohen, F. Diaz y Diaz, and M. Olivier, 
{\em Enumerating quartic dihedral extensions of $\mathbb{Q}$\/}. 
Compositio Math. \textbf{133} (2002), 65--93.

\bibitem{CT17} H. Cohen and F. Thorne, {\em On $D_\ell$-extensions of odd prime degree $\ell$\/}.
\texttt{arXiv:1609.09153 [math.NT]} (version 2, 26 Jan 2017)

\bibitem{Col93} P. Colmez,
{\em P\'eriodes des vari\'et\'es ab\'eliennes \`a multiplication complexe\/}.
Ann. of Math. \textbf{138} (1993), 625--683.

\bibitem{CHM98} J. H. Conway, A. Hulpke, and J. McKay, {\em On transitive permutation groups\/}. 
LMS Journal of Computation and Mathematics \textbf{1} (1998), 1--8.

\bibitem{DH71} H. Davenport and H. Heilbronn, {\em On the density of discriminants of cubic fields. II.\/} 
Proc. Roy. Soc. London Ser. A \textbf{322} (1971), no. 1551, 405--420. 

\bibitem{DM96} J. Dixon and B. Mortimer, {\em Permutation groups.\/} Springer, Berlin-Heidelberg-New York, 1996.

\bibitem{Dod84} B. Dodson, {\em The structure of Galois groups of CM-fields.\/} 
Trans. Amer. Math. Soc. \textbf{283} (1984), 1--32.

\bibitem{Du17} E. P. Dummit, {\em Counting $G$-extensions by discriminant.\/} Math. Res. Lett. \textbf{25} (2018), 1151--1172.

\bibitem{ELMV11} M. Einsiedler, E. Lindenstrauss, P. Michel, and A. Venkatesh, {\em 
Distribution of periodic torus orbits and Duke's theorem for cubic fields.\/} Ann. of Math. \textbf{173} (2011), 815--885.

\bibitem{EV06} J. S. Ellenberg and A. Venkatesh, 
{\em The number of extensions of a number field with fixed degree and bounded discriminant.\/} Ann. of Math. \textbf{163} (2006), 723--741.



\bibitem{ETW17} J. S. Ellenberg, T. Tran, and C. Westerland, {\em 
Fox-Neuwirth-Fuks cells, quantum shuffle algebras, and Malle's conjecture for function fields.\/} 
\texttt{arXiv:1701.04541 [math.NT]} (version 1, 17 Jan 2017)



\bibitem{IK04} H. Iwaniec and E. Kowalski, {\em Analytic number theory.\/} 
American Mathematical Society Colloquium Publications, 53. American Mathematical Society, Providence, RI, 2004. xii+615 pp. 

\bibitem{KY05} A. C. Kable and A. Yukie, {\em On the number of quintic fields.\/} Invent. Math. \textbf{160} (2005), 217--259.


\bibitem{KM04}  J. Kl\"uners and G. Malle, {\em Counting nilpotent Galois extensions.\/} 
J. Reine Angew. Math. \textbf{572} (2004), 1--26.

\bibitem{Klu06}  J.~Kl\"uners, \emph{Asymptotics of number fields and the Cohen-Lenstra heuristics},
J.~Th\'eor.~Nombres Bordeaux \textbf{18} (2006), no. 3, 607--615. 

\bibitem{Klu12} J. Kl\"uners, {\em The distribution of number fields with wreath products as Galois groups.\/} 
Int. J. Number Theory \textbf{8} (2012), 845--858.


\bibitem{LMFDB} The LMFDB Collaboration, {\em The $L$-functions and Modular Forms Database},
\url{http://www.lmfdb.org}, 2017. [Online; accessed 2 July 2017.]


\bibitem{Mak85} S. M\"aki, {\em On the density of abelian number fields.\/} 
Ann. Acad. Sci. Fenn. Ser. A I Math. Dissertationes No. 54 (1985), 104 pp. 

\bibitem{Mal02} G. Malle, {\em On the distribution of Galois groups.\/} J. Number Theory \textbf{92} (2002), 315--329.

\bibitem{Mal04} G. Malle, {\em On the distribution of Galois groups. II.\/} Experiment. Math. \textbf{13} (2004), 129--135.

\bibitem{Meh17} H. Mehta,  doctoral thesis, University of South Carolina, in preparation.

\bibitem{Obu13} A. Obus, {\em On Colmez's product formula for periods of CM-abelian varieties\/}.
Math. Ann. \textbf{356} (2013), 401--418.

\bibitem{odlyzko} A. M. Odlyzko, {\em Bounds for discriminants and related estimates for class numbers, regulators and zeros of 
zeta functions: a survey of recent results}, S\'em. Th\'eor. Nombres Bordeaux (2) \textbf{2} (1990), 119--141.

\bibitem{Oor12} F. Oort, {\em CM Jacobians.\/} Notes from a talk at the conference ``Galois covers and deformations'', Bordeaux,
June 25--29, 2012. \url{http://www.staff.science.uu.nl/~oort0109/Bord2-VI-12.pdf}

\bibitem{P18} S. Parenti, {\em Unitary $\textrm{PSL}_2$ CM fields and the Colmez conjecture.\/} J. Number Theory \textbf{193} (2018), 336--356.

\bibitem{PARI}
    The PARI~Group, \emph{PARI/GP}, version {\tt 2.8.0}. Univ. Bordeaux, 2015,
    \url{http://pari.math.u-bordeaux.fr/}.
    

\bibitem{Wan17} J. Wang, {\em Malle's conjecture for $S_n \times A$ for $n = 3, 4$.\/}
\texttt{arXiv:1705.00044 [math.NT]} (version 1, 28 Apr 2017)

\bibitem{Yan10a} T. H. Yang, {\em An arithmetic intersection formula on Hilbert modular surfaces.\/}
Amer. J. Math. \textbf{132} (2010), 1275--1309.

\bibitem{Yan10b} T. H. Yang, {\em The Chowla-Selberg formula and the Colmez conjecture.\/} Canad. J. Math. \textbf{62}
(2010), 456--472.

\bibitem{Yan13} T. H. Yang, {\em Arithmetic intersection on a Hilbert modular surface and the Faltings height.\/}
Asian J. Math. \textbf{17} (2013), 335--381.

\bibitem{YY16} T. H. Yang and H. Yin, {\em CM fields of dihedral type and Colmez conjecture}. manuscripta math.
(2017). https://doi.org/10.1007/s00229-017-0966-z. 

\bibitem{YZ15} X. Yuan and S. Zhang, {\em On the Averaged Colmez Conjecture.\/} Ann. of Math. \textbf{187} (2018), 533--638.



\end{thebibliography}
\end{document}